\newcommand{\Isom}{\mathit{Isom}}
\newcommand{\N}{\mathbb{N}}
\newcommand{\C}{\mathbb{C}}
\newcommand{\R}{\mathbb{R}}
\newcommand{\ccl}[3]{#1^{#2}(#3)}
\newcommand{\corchete}[1]{\left\{{#1}\right\}}
\DeclareMathOperator{\mdeg}{mdeg}
\DeclareMathOperator{\End}{End}
\DeclareMathOperator{\rank}{rank}
\DeclareMathOperator{\Tr}{Tr}
\DeclareMathOperator{\Char}{char}
\DeclareMathOperator{\Nil}{\mathcal{N}}
\DeclareMathOperator{\Wedge}{\mathsf{\Lambda}}
\newtheorem{teo}{Theorem}[section]
\newtheorem{prop}[teo]{Proposition}
\newtheorem{lema}[teo]{Lemma}
\newtheorem{coro}[teo]{Corollary}
\theoremstyle{remark}\newtheorem{obs}[teo]{Remark}
\theoremstyle{definition}\newtheorem{ej}[teo]{Example}
\begin{document}

\title{\textbf{A new inequality about matrix products and a Berger-Wang formula}}

\author{\small{Eduardo Oreg\'on-Reyes}}
\author{
\small{EDUARDO OREG\'ON-REYES}}
\markboth{E Oreg\'on Reyes}{A new inequality about matrix products and a Berger-Wang formula}

\date{}
\maketitle

\begin{abstract}
We prove an inequality relating the norm of a product of matrices $A_n\cdots A_1$ with the spectral radii of  subproducts $A_j\cdots A_i$ with $1\leq i\leq j\leq n$. Among the consequences of this inequality, we obtain the classical Berger-Wang formula as an immediate corollary, and give an easier proof of a characterization of the upper Lyapunov exponent due to I. Morris. As main ingredient for the proof of this result, we prove that for a large enough $n$, the product $A_n\cdots A_1$ is zero under the hypothesis that $A_j\cdots A_i$ are nilpotent for all $1\leq i \leq j\leq n$.  
\end{abstract}

\section{Introduction}
Let $k$ be a field, and let $M_{d}(k)$ be the algebra of $d\times d$ matrices with coefficients in $k$. If $k=\R$ or $\C$, let $\|.\|$ be any norm on $k^d$, with the corresponding operator norm on $M_{d}(k)$ also denoted by $\|.\|$. The spectral radius of a matrix $A$ will be denoted by $\rho(A)$. Given a bounded set $\mathcal{M}\subset M_{d}(k)$, the \textit{joint spectral radius} of $\mathcal{M}$ is defined by the formula
\begin{equation}\label{jsr}
\mathfrak{R}(\mathcal{M})=\lim_{n\to \infty}{\left(\sup \corchete{\|A_{1}\cdots A_{n}\| : A_{i}\in \mathcal{M} }\right)^{1/n}}.
\end{equation}
By a submultiplicative argument, this quantity is well defined and finite, and the limit in the right hand side of \eqref{jsr} can be replaced by the infimum over $n$. 

The joint spectral radius was introduced by Rota and Strang \cite{rota}, and for a set $\mathcal{M}\subset M_{d}(k)$, represents the maximal exponential growth rate of the partial sequence of products $(A_1\cdots A_n)_{n}$ of a sequence of matrices $A_1,A_2,\dots$ with $A_i\in \mathcal{M}$. For this reason, this quantity has appeared in several mathematical contexts, making it an important object of study (see e.g. \cite{gur,jun,mathermorris,tsblo}). In particular, the question of whether the joint spectral radius may be approximated by periodic sequences plays an important role. The Berger-Wang formula gives a positive answer to this question in the case of bounded sets of matrices \cite{bw}:

\begin{teo}[Berger-Wang formula]\label{bergerwangantiguo}
If $\mathcal{M}\subset M_{d}(\C)$ is bounded, then 
\begin{equation}\label{formulabwantiguo}
\mathfrak{R}(\mathcal{M})=\limsup_{n\to \infty}{\left(\sup\corchete{\rho(A_{1}\cdots A_{n}) : A_{i}\in \mathcal{M}}\right)^{1/n}}.
\end{equation}
\end{teo}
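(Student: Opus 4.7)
The plan is to deduce Theorem \ref{bergerwangantiguo} from the main matrix-product inequality announced in the abstract. One direction of \eqref{formulabwantiguo} is standard: since $\rho(B) \le \|B\|$ for every $B \in M_d(\mathbb{C})$ and every operator norm, one has
$$\sup\corchete{\rho(A_1\cdots A_n) : A_i \in \mathcal{M}}^{1/n} \le \sup\corchete{\|A_1\cdots A_n\| : A_i \in \mathcal{M}}^{1/n}$$
for every $n$, and by \eqref{jsr} together with submultiplicativity the right-hand side tends to $\mathfrak{R}(\mathcal{M})$. Passing to the $\limsup$ yields the $\le$ inequality in \eqref{formulabwantiguo}.

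For the reverse inequality I would invoke the main inequality of the paper, which — in the spirit of Bochi's strengthenings of Berger--Wang — bounds $\|A_n \cdots A_1\|$ from above in terms of the spectral radii $\rho(A_j \cdots A_i)$ of the contiguous subproducts and the operator norms $\|A_k\|$ of the individual factors, with multiplicative constants depending only on the dimension $d$. Applied to sequences drawn from the bounded set $\mathcal{M}$, the norm factors $\|A_k\|$ are absorbed into a uniform constant, and taking $\sup_{A_i \in \mathcal{M}}$ followed by $\limsup_{n \to \infty}$, the left-hand side converges to $\mathfrak{R}(\mathcal{M})$ while the right-hand side is dominated by the quantity appearing on the right of \eqref{formulabwantiguo}. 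This gives the missing direction $\mathfrak{R}(\mathcal{M}) \le \limsup_{n\to\infty}\sup\corchete{\rho(A_1\cdots A_n) : A_i \in \mathcal{M}}^{1/n}$.

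The real obstacle is not the corollary itself but the underlying inequality, whose critical case (also announced in the abstract) is the nilpotent one: \emph{if every contiguous subproduct $A_j \cdots A_i$ with $1 \le i \le j \le n$ is nilpotent and $n$ is sufficiently large in terms of $d$, then $A_n \cdots A_1 = 0$.} I would attack this nilpotent rigidity lemma by induction on $d$: a family of nilpotent matrices whose long contiguous products remain nilpotent must stabilize a common proper subspace (an invariant-subspace argument in the spirit of Levitzki's and Engel's theorems on semigroups of nilpotent matrices), which provides a strict dimensional reduction. Once the nilpotent lemma is in place, the general inequality is expected to follow by a compactness or continuity argument reducing the nearly-nilpotent regime to the purely nilpotent one, after which Theorem \ref{bergerwangantiguo} is just a few lines of bookkeeping as above.
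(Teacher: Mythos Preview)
Your top-level deduction of Theorem \ref{bergerwangantiguo} from the main inequality is what the paper does (Theorem \ref{desigualdad} yields the Bochi-type bound $\mathfrak{R}(\mathcal{M}) \le C \max_{1\le j \le N}\bigl(\sup_{A_i\in\mathcal{M}}\rho(A_1\cdots A_j)\bigr)^{1/j}$, and applying this to $\mathcal{M}^m$ with $m\to\infty$ kills the constant; note that \eqref{eqdesigualdad} holds only for the \emph{fixed} length $N=N(d)$, so your ``take $\limsup_{n\to\infty}$'' step needs this extra reduction). The genuine gap is in your proposed proof of the nilpotent lemma. A Levitzki/Engel invariant-subspace argument requires the \emph{semigroup} generated by the $A_i$ to consist of nilpotents, whereas here only the \emph{contiguous} subproducts $A_j\cdots A_i$ are assumed nilpotent --- and this is strictly weaker. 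The paper exhibits an explicit tuple $(D,C,B,A)\in\Nil^4_3(k)$ (for $\Char k\neq 2$) with $ABCD\neq 0$ which is not simultaneously triangularizable ($A$ and $B$ share no common one-dimensional invariant subspace); in particular $N(3)=5>3$, so no triangularization-based induction on $d$ can work. The paper's mechanism is different: it inducts on \emph{rank} via exterior powers. If every $T_j$ has rank $\le r$ then every $\Wedge^r T_j$ has rank $\le 1$, and a direct linear-independence argument (Proposition \ref{proprango}, Corollary \ref{cororango}) forces any product of $\binom{d}{r}$ such maps in $\Nil^{\binom{d}{r}}(\Wedge^r V)$ to vanish; pulling back, the rank of $T_{\binom{d}{r}}\cdots T_1$ drops below $r$, and iterating gives $N(d)\le\prod_{r=1}^{d-1}\binom{d}{r}$.

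The passage from the nilpotent lemma to inequality \eqref{eqdesigualdad} is likewise not by compactness but by the projective Nullstellensatz: Theorem \ref{nilpotentes} says that the entries $f_j$ of $A_N\cdots A_1$ vanish on the common zero locus of the traces $T^\ell_{\alpha,\beta}=\Tr\Wedge^\ell(A_\beta\cdots A_\alpha)$, so some power $(f_j)^r$ lies in the ideal they generate, with multihomogeneous coefficients (Theorem \ref{traduccion}); inequality \eqref{eqdesigualdad} with $\delta=1/r$ is then read off term by term. A pure compactness argument might recover something over $\C$, but not the multihomogeneous form of \eqref{eqdesigualdad} nor its validity over an arbitrary local field.
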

This result has been generalized by Morris, to the context of linear cocycles (including infinite dimensional ones) \cite{morrisbw}, by using multiplicative ergodic theory. In the finite dimensional case, the problem of finding a formula similar to \eqref{formulabwantiguo}, when there is a Markov-type constraint on the allowed products was presented by Kozyakin \cite{kozyakin}. Although the result of Morris already applies to this kind of constraints, the novelty in Kozyakin's proof is that his arguments are purely linear algebraic, and are consequences of Theorem \ref{bergerwangantiguo}. 

Another tool to obtain results related to joint spectral radius was found by J. Bochi in \cite{bociineq}. In that work, he proved some inequalities that may be seen as lower bounds for joint spectral radii of sets of matrices in terms of the norms of such matrices. Following that method, the purpose of this article is to present an inequality relating the norm of the product of matrices with the spectral radii of subproducts. We will give an upper bound for the norm of the product of matrices $A_N\cdots A_1$ in terms of the spectral radii of its subproducts $A_{\beta}A_{\beta-1}\cdots A_{\alpha+1}A_{\alpha}$. This inequality will allow us to obtain relations similar to \eqref{formulabwantiguo}. It holds in an arbitrary local field where the notions of absolute value, norm, and spectral radius are well defined (see Section \ref{pruebadesigualdad} for a detailed explanation). Our main result is the following:

\begin{teo}\label{desigualdad} Let $d \in \N$, $k$ be a local field, and $\|.\|$ be a submultiplicative norm on $M_{d}(k)$. There exist constants $N = N(d)\leq  \prod^{d}_{i=1}{\binom{d}{i}}$, $r = r(d,N) \leq  (Nd+1)^{Nd^2+2}$, and $C = C(d,\|.\|) > 1$ such that for all $n\geq N$ and $A_1,\dots, A_n \in M_{d}(k)$:
\begin{equation}\label{eqdesigualdad}
\|A_n\cdots A_1\| \leq C \left(\prod_{1\leq i\leq n}{\|A_i\|}\right)\max_{1\leq \alpha\leq \beta\leq n}{ \left( \frac{\rho(A_{\beta}\cdots A_{\alpha}) }{ \prod_{\alpha\leq i\leq \beta}{\|A_i\|}} \right)^{1/r}},  
\end{equation}
where the right hand side is treated as zero if one of the $A_{i}$ is the zero matrix.
\end{teo}
So for large enough $n$, if the norm of the product $A_n\cdots A_1$ is comparable to (that is, not much smaller than) the product of the norms, then there exists a subproduct $A_{\beta}\cdots A_{\alpha}$ whose spectral radius is comparable to (that is, not much smaller than) $\prod_{\alpha\leq i \leq \beta}{\|A_{i}\|}$.

Note that inequality \eqref{eqdesigualdad} is homogeneous in each variable $A_{i}$. We will later show that the upper bound $N(d)\leq \prod^{d}_{i=1}{\binom{d}{i}}$ is not sharp, because $N(3)\leq 5$ (see Proposition \ref{casotres}). In addition, when $k=\C$, the constant $C$ in \eqref{eqdesigualdad} may be chosen independent of the norm $\|.\|$ and found explicitly, provided that $\|.\|$ is an operator norm (see Proposition \ref{independ} and Remark \ref{effective}).

The approach of using inequalities to prove results similar to \eqref{formulabwantiguo} was first used by Elsner \cite{elsner} in his proof of the Berger-Wang formula (with an inequality of a different nature from Bochi's work). Inequalities like \eqref{eqdesigualdad} also have been applied by I. Morris to study matrix pressure functions \cite{morrispr} and by the author in the context of isometries in Gromov hyperbolic spaces \cite{yo}. The novelty of the inequality presented here is that it respects the order in which the matrices are multiplied.  While previous works considered a sum or a maximum over \textit{all} possible subproducts of length $N$ with respect to a given alphabet of matrices, in Theorem \ref{desigualdad} we consider just one product of length $N$ together with its subproducts, hence our inequality does not follow from previously known Bochi-type results. In addition, our error in the upper bound in terms of spectral radii is multiplicative (the constant $C$) and not additive as in the case of Elsner's work. These distinctions allow inequality \eqref{eqdesigualdad} to be used in cases where only some specific kinds of products are allowed (see Theorem \ref{bergerwangnuevo} below), as well to relate asymptotic quantities (like the joint spectral radius) to non-asymptotic expressions, in a uniformly controlled way (see Theorem \ref{detodos}).

The proof of this inequality is based on the non trivial case of equality, where the right hand side of \eqref{eqdesigualdad} is zero but the matrices $A_i$ are non-zero. This occurs when $\rho(A_{j}\cdots A_{i})=0$ for all $1\leq i \leq j \leq N$, that is, when $A_{j}\cdots A_{i}$ are all nilpotent. Denote by $\Nil_{d}(k)$ the set of nilpotent elements of $M_{d}(k)$. Then define, for $n\geq 1$, the set $\Nil_{d}^{n}(k)$ of $n$-tuples $(A_1,\dots,A_n)\in {M_{d}(k)}^{n}$ such that $A_{j}\cdots A_{i}\in \Nil_{d}(k)$ for all $1\leq i \leq j \leq n$. 
The particular case of \eqref{eqdesigualdad} that we highlighted can be restated as follows:

\begin{teo}\label{nilpotentes} For all $d\geq 1$ there exists an integer $N=N(d)\geq 1$ such that, for every field $k$, if  $(A_1,\dots,A_N) \in \Nil_{d}^{N}(k)$, then the product $A_{N}\cdots A_{1}$ is zero.
\end{teo}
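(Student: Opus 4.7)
The approach is induction on the matrix size $d$. The base case $d=1$ is immediate: a $1\times 1$ nilpotent matrix is zero, so $N(1)=1$ works. For the inductive step, assume $N(d-1)$ has been constructed. Given $(A_1,\dots,A_N)\in \Nil_{d}^{N}(k)$, I would study the sequence of image subspaces
\[
U_j := \Ima(A_j\cdots A_1)\subseteq k^d, \qquad U_0=k^d,\qquad U_j=A_j(U_{j-1}).
\]
Since $A_1$ is nilpotent, $\dim U_1\le d-1$, and the sequence $\dim U_j$ is non-increasing. If some $U_j=0$ then $A_N\cdots A_1=0$ and we are done; otherwise we aim for a contradiction when $N$ is large. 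Because strict drops of the dimension can occur at most $d-1$ times, a sufficiently long sequence must contain a ``plateau'': an interval $[a,b]$ on which $\dim U_j=r>0$ is constant. On such a plateau, each $A_{j+1}|_{U_j}\colon U_j\xrightarrow{\sim} U_{j+1}$ is a linear isomorphism.

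The heart of the argument is to bound the length of any plateau. The key observation is that if two subspaces within the plateau ever coincide, say $U_j=U_{j'}$ for $j<j'$ in $[a,b]$, then the contiguous product $A_{j'}\cdots A_{j+1}$ — nilpotent on $k^d$ by hypothesis — restricts to a bijection of $U_j$, which is impossible unless $U_j=0$, contradiction. To turn this into a quantitative bound over an arbitrary field (where the Grassmannian $\mathrm{Gr}(r,d)$ is typically infinite and pigeonhole fails), I would track the cumulative span $W_c := U_a+U_{a+1}+\cdots+U_{a+c}$. Its dimension is non-decreasing and bounded by $d$, hence stabilizes after at most $d-r$ steps to some subspace $W\subseteq k^d$ containing all later plateau subspaces. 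If $\dim W\le d-1$, the subspaces $U_j$ lie inside a proper ambient space; restricting the relevant maps $A_{j+1}|_{U_j}$ to $W$ (and checking that contiguous products of the restrictions remain nilpotent as restrictions of globally nilpotent operators) produces an instance of the problem in $M_{\dim W}(k)$. The inductive hypothesis then forces the restricted product to vanish after $N(d-1)$ steps, contradicting the bijectivity and thereby bounding the plateau length.

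The main obstacle I anticipate is the case $\dim W=d$, in which the images $U_j$ span all of $k^d$ and the direct restriction to a proper subspace is unavailable. My plan to handle this is to run a dual analysis on the kernel sequence $K_j := \ker(A_N\cdots A_{j+1})$, whose dimensions are non-decreasing and satisfy a symmetric plateau phenomenon; dually, long kernel plateaus force $K_j$ into a proper subspace or produce an immediate contradiction by the same nilpotent-bijection obstruction. Combining the forward analysis of shrinking images with the backward analysis of growing kernels, one of the two filtrations must eventually enter a proper subspace for $N$ large enough relative to $d$ and $N(d-1)$, triggering the induction. Tallying the contributions of the at most $d$ plateaus yields a recursion of the form $N(d)\le d\cdot c\bigl(N(d-1),d\bigr)$ for an explicit function $c$, which establishes existence of $N(d)$ and completes the induction.
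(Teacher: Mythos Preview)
Your proposal has a genuine gap at the restriction step. On a plateau you only know that $A_{j+1}(U_j)=U_{j+1}\subseteq W$; you do \emph{not} know that $A_{j+1}(W)\subseteq W$, so the individual $A_{j+1}$ do not restrict to endomorphisms of $W$ and there is no tuple in $\Nil_{\dim W}^{\,\bullet}(k)$ on which to invoke the inductive hypothesis. If instead you manufacture endomorphisms of a fixed $r$-dimensional space by choosing isomorphisms $\psi_j\colon k^r\to U_j$ and setting $B_{j+1}=\psi_{j+1}^{-1}A_{j+1}\psi_{j}$, then a contiguous product becomes $B_{j'}\cdots B_{j+1}=\psi_{j'}^{-1}(A_{j'}\cdots A_{j+1})\psi_{j}$, with \emph{different} identifications at the two ends; nilpotence of $A_{j'}\cdots A_{j+1}$ on $k^d$ does not transfer to this composite on $k^r$ unless $U_{j}=U_{j'}$, which is precisely the coincidence you cannot force over an infinite field. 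Your parenthetical ``restrictions of globally nilpotent operators are nilpotent'' is true only for \emph{invariant} subspaces, and nothing in your construction produces one. The dual kernel analysis sketched for the case $\dim W=d$ runs into the identical obstruction.

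The paper sidesteps this by reducing \emph{rank} rather than ambient dimension, via the exterior-power functor. The point is that $\Wedge^{r}$ is multiplicative and sends nilpotents to nilpotents, so it carries $\Nil^{n}(V)$ into $\Nil^{n}(\Wedge^{r}V)$ with no invariance hypothesis needed; moreover a nilpotent $T$ with $\rank T=r$ has $\rank(\Wedge^{r}T)=1$. A direct linear-independence argument (close in spirit to your ``$U_j=U_{j'}$ forces a nilpotent bijection'' observation, but in the rank-$\le 1$ setting where it terminates in $\dim V$ steps) handles that base case; applying $\Wedge^{r}$ then bounds how many factors a product can have while keeping rank $\ge r$, and iterating over $r=d-1,d-2,\dots,1$ yields $N(d)\le\prod_{r=1}^{d-1}\binom{d}{r}$.
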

The proof of Theorem \ref{nilpotentes} is purely linear algebraic, exploiting the properties of the $n$-exterior power functor. This result may be compared with Levitzki's Theorem \cite[Thm.~2.1.7]{raro}, that asserts that for an algebraically closed field $k$, every semigroup  $S\subset M_{d}(k)$ of nilpotent matrices is simultaneously triangularizable. That is, there is some $B\in \mathrm{GL}_{d}(k)$ such that $BAB^{-1}$ is upper triangular with zero diagonal for every $A \in S$ (compare also with the Burnside-Schur Theorem for semigroups of matrices \cite{mcnaugh}). In particular, if $A_1,\dots,A_d\in S$, then the product $A_1 \cdots A_d$ is zero. As we show in Subsection \ref{computos}, the optimal $N(d)$ in Theorem \ref{nilpotentes} is in general larger than $d$, therefore the result presented here does not follow from Levitzki's Theorem nor Burnside-Schur Theorem, and we don't expect to obtain any information about the semigroup generated by $A_1,\dots,A_N$. In general, the matrices satisfying the hypothesis of Theorem \ref{nilpotentes} admit no normal form as simple as in Levitzki's Theorem.

\vspace{-4ex}\paragraph{Applications to Ergodic theory.} Let $(X,\mathcal{F},\mu)$ be a probability space, and let $T:X\rightarrow X$ be a measure preserving map. By a \textit{linear cocycle} over $X$, we mean a measurable map $A:X\rightarrow M_{d}(k)$ together with the family of maps $A^{n}$ defined by the formula
\begin{equation}
\ccl{A}{n}{x}=A(T^{n-1}x)\cdots A(Tx)A(x), \hspace{2mm} \text{ for }n\geq 1,x\in X.   \notag
\end{equation}
These maps satisfy the multiplicative cocycle relation $A^{m+n}(x)=A^{m}(T^{n}x)A^{n}(x)$ for all $m,n\geq 1,x \in X$.

We usually denote a linear cocycle by $\mathcal{A}=(X,T,A)$, and say that $\mathcal{A}$ is \textit{integrable} if $\max(\log{\|A\|},0)$ is integrable. In this case, Kingman's theorem implies that, for $\mu$-almost all $x\in X$, the limit $\lambda(x)=\lim_{n \to \infty}{\frac{\log{\|\ccl{A}{n}{x}\|}}{n}}\in [-\infty,\infty)$ exists, and moreover, $\lambda$ is $T$-invariant. This function is the upper \textit{Lyapunov exponent} of $A$, and is one of the most important concepts in multiplicative ergodic theory.

As an application of our inequality, we reprove the following theorem due to I. Morris (first tested numerically in \cite{fisicos} and proved by Avila-Bochi for $\mathrm{SL}_{2}(\R)$ in \cite[Thm.~15]{avibochi}).

\begin{teo}\label{bergerwangnuevo}\cite[Thm.~1.6]{morrisbw}
Let $T$ be a measure-preserving transformation of a probability space $(X,\mathcal{F},\mu)$ and let $A: X \rightarrow  M_{d}(k)$ be an integrable linear cocycle. If $\lambda$ is as before, then for $\mu$-almost all $x\in X$ we have
\begin{equation}\label{bww}
\limsup_{n\to \infty}{\frac{\log(\rho(\ccl{A}{n}{x}))}{n}=\lambda(x)}. 
\end{equation}
\end{teo}

While Morris's proof of this result relies on Oseledets Theorem, we will mainly use Theorem \ref{desigualdad} and a quantitative version of Poincar\'e's Recurrence Theorem.

\paragraph{Organization of the paper.} In Section \ref{secciondos} we prove Theorem \ref{nilpotentes} and compute $N(d)$ for $d=2,3$. Then in Section \ref{secciontres}, via Nullstellensatz we translate this theorem into a polynomial identity, from which we deduce Theorem \ref{desigualdad} in Section \ref{pruebadesigualdad}. We prove Theorem \ref{bergerwangnuevo} in Section \ref{seccioncinco}, and discuss some geometric consequences and analogies of this result in Section \ref{seccionseis}.

\section{Proof of Theorem \ref{nilpotentes}}\label{secciondos}

We begin the proof of Theorem \ref{nilpotentes} with some useful results. For a given vector space $V$ (over an arbitrary field), let $\End(V)$ be the algebra of linear endomorphisms of $V$. The dimension of the image of a linear transformation $T\in \End(V)$ will be denoted as $\rank(T)$. Also, let $\mathcal{N}^{n}(V)$ be the set of $n$-tuples $(T_1,\dots,T_n)\in \End(V)^{n}$ such that $T_{j}\cdots T_{i}$ is nilpotent for all $1\leq i \leq j \leq n$. With our previous notation, we have $\mathcal{N}^{n}(k^{d})=\mathcal{N}^{n}_{d}(k)$.

\begin{prop}\label{proprango}
Let $n\geq 1$ and $(T_1,\dots,T_n)\in \Nil^{n}(V)$ be such that $\rank(T_j)\leq 1$ for all $1\leq j\leq n-1$. If $v\in V$ and $T_n\cdots T_{1}v\neq 0$, then $v$, $T_1{v}$, $T_{2}T_{1}v$, $\dots,T_{n}\cdots T_{1}v$ are all distinct and form a linearly independent set. 
\end{prop}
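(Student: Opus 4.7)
I plan to prove this by induction on $n$. Set $v_0 := v$ and $v_k := T_k \cdots T_1 v$ for $1 \leq k \leq n$; because $v_n \neq 0$, each $v_k$ is nonzero. For the base case $n = 1$, any relation $T_1 v = \lambda v$ would force $\lambda$ to be a nonzero eigenvalue of the nilpotent $T_1$, a contradiction. For the inductive step I apply the induction hypothesis not to $(T_1,\ldots,T_{n-1})$ but to the shifted tuple $(T_2,\ldots,T_n)$ starting from the vector $v_1 = T_1 v$: this tuple lies in $\Nil^{n-1}(V)$, its first $n-2$ entries $T_2,\ldots,T_{n-1}$ have rank $\leq 1$, and $T_n\cdots T_2\, v_1 = v_n \neq 0$. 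The inductive hypothesis therefore yields linear independence of $\{v_1,v_2,\ldots,v_n\}$, reducing the problem to showing $v_0 \notin \operatorname{span}(v_1,\ldots,v_n)$.

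Suppose for contradiction $v_0 = \sum_{k=1}^{n} c_k v_k$. The strategy is to apply $T_1$ to this relation and conclude $v_1 = 0$, contradicting $v_1 \neq 0$. Since $T_1 \neq 0$ has rank exactly $1$, write $T_1 x = \phi_1(x) u_1$ with $u_1 \in V\setminus\{0\}$ and $\phi_1 \in V^{*}$; the task then reduces to showing $\phi_1(v_k) = 0$ for every $1 \leq k \leq n$. These vanishings will be extracted from the nilpotency of the sub-products $T_k T_{k-1} \cdots T_1$. Writing $T_j x = \phi_j(x) u_j$ for each $1\leq j\leq n-1$ and setting $\alpha_j := \phi_j(v_{j-1}) \neq 0$ (so $v_j = \alpha_j u_j$), a routine telescoping calculation yields the formulas
\[ T_k\cdots T_1\, x \;=\; \phi_1(x)\,\frac{\alpha_k}{\alpha_1}\, u_k \quad\text{for } k\leq n-1,\qquad T_n T_{n-1}\cdots T_1\, x \;=\; \frac{\phi_1(x)}{\alpha_1}\, v_n. \]
Each such operator has rank one, so its nilpotency is equivalent to its trace vanishing; reading off the traces gives $\phi_1(u_k) = 0$ for $1 \leq k \leq n-1$ and $\phi_1(v_n) = 0$, which together deliver $\phi_1(v_k) = 0$ for all $1 \leq k \leq n$, as required.

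The principal technical point is the telescoping identity above: once it is in hand, everything collapses through the rank/trace principle for rank-$1$ operators. Conceptually, the vanishings $\phi_1(u_k)=0$ express a strict triangularity of $T_1$ with respect to the images of $T_2,\ldots,T_{n-1}$, reminiscent of Levitzki's simultaneous triangularization, and the plan exploits only this minimal fragment of triangular structure to force the contradiction.
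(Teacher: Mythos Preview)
Your argument is correct and rests on the same core observation as the paper's: each subproduct $T_k\cdots T_1$ is a rank-one nilpotent, so it annihilates its own image, which forces $T_1 v_k = \phi_1(v_k)u_1 = 0$ for every $k\ge 1$. The paper phrases this as $(T_j\cdots T_1)^2=0$ and applies the full product $T_n\cdots T_1$ to the dependence relation, whereas you read it off as ``trace~$=0$'' and apply only $T_1$; for rank-one operators these are equivalent formulations.

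The organization differs slightly. The paper invokes the inductive hypothesis twice: first on $(T_1,\ldots,T_{n-1})$ to isolate the coefficient of $v$ in a putative relation, and then on $(T_2,\ldots,T_n)$ to finish. You apply it only once, to the shifted tuple $(T_2,\ldots,T_n)$, and then derive the contradiction $v_1=0$ directly from $\phi_1(v_k)=0$. This makes your write-up marginally more economical, but the mathematical content is the same.
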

\begin{proof} We will use induction on $n$. The case $n=1$ comes from the nilpotence of $T_1$. So, assume that the result holds for tuples in $\Nil^{n-1}(V)$ and let $(T_1,\dots,T_{n})\in \Nil^{n}(V)$ and $v\in V$ be as in the hypothesis.
Take a linear combination of $v$, $T_1{v}$, $\dots,T_{n}\cdots T_{1}$ of the form 
\begin{equation}\label{desigualdadpropo}
\lambda_{0}v+\lambda_{1}T_1{v}+\dots+\lambda_{n-1}T_{n-1}\cdots T_{1}v+\lambda_{n}T_{n}\cdots T_{1}v=0,
\end{equation}
and suppose that this linear combination is non trivial. As $(T_1,\dots,T_{n-1})\in \Nil^{n-1}(V)$ also satisfies the hypothesis with respect to $v$, by our inductive assumption we have $\lambda_{n}\neq 0$.
Now, apply $T_{n}\cdots T_{1}$ in \eqref{desigualdadpropo}. The rank condition over the maps $T_{j}$ and the fact that $(T_1,\dots,T_n)\in \Nil^{n}(V)$ imply that $(T_{j}\cdots T_{1})^{2}=0$, for all $1\leq j \leq n$. Hence, the left hand side of \eqref{desigualdadpropo} becomes $\lambda_{0}T_{n}\cdots T_{1}v$, forcing $\lambda_{0}=0$. But in that case, equation \eqref{desigualdadpropo} would be a non trivial linear combination of $\corchete{w,T_{2}{w},T_{3}T_{2}w,\dots,T_{n}\cdots T_{2}w}$, with $w=T_{1}v$. This is impossible by our inductive assumption, since $(T_{2},\dots,T_{n})\in \Nil^{n-1}(V)$ satisfies the hypothesis of the proposition with respect to $w$. We conclude that all linear combinations of $v$, $T_1{v}$, $T_{2}T_{1}v$, $\dots,T_{n}\cdots T_{1}v$ of the form \eqref{desigualdadpropo} are trivial, and hence this set is linearly independent with exactly $n+1$ elements. 
\end{proof}

\begin{coro}\label{cororango}
If $(T_1,\dots,T_d)\in \Nil^{d}(V)$ and $\rank(T_j)\leq 1$ for all $1 \leq j\leq d-1$, then $T_d\cdots T_1=0$. 
\end{coro}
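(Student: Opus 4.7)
The plan is to derive this immediately from Proposition \ref{proprango} by a dimension count. I would argue by contradiction: suppose that $T_d \cdots T_1 \neq 0$, so that there exists some $v \in V$ with $T_d \cdots T_1 v \neq 0$.

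Since $d = \dim V$, the hypothesis $1 \leq n \leq \dim V$ of Proposition \ref{proprango} is satisfied with $n = d$. The tuple $(T_1, \dots, T_d)$ lies in $\Nil^{d}(V)$, and the rank condition $\rank(T_j) \leq 1$ holds for $1 \leq j \leq d-1$, exactly matching the hypotheses of the proposition. Applying it to the vector $v$ gives that the set
\[
\{v, T_1 v, T_2 T_1 v, \dots, T_d \cdots T_1 v\}
\]
is linearly independent in $V$.

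But this set has $d + 1$ elements, which contradicts $\dim V = d$. Therefore no such $v$ exists, and $T_d \cdots T_1 = 0$ as required.

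There is no real obstacle here; the whole content is packaged into Proposition \ref{proprango}, and the corollary is simply the pigeonhole observation that producing $n+1$ linearly independent vectors inside an $n$-dimensional space is impossible once $n = \dim V$. The only thing to double-check is that the hypothesis $n \leq \dim V$ of the proposition is tight enough to allow the choice $n = d$, which it is.
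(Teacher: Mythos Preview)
Your proof is correct and essentially identical to the paper's: both argue by contradiction, apply Proposition~\ref{proprango} with $n=d$ to obtain $d+1$ linearly independent vectors, and conclude by the obvious dimension count.
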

\begin{proof}
Assume the contrary and let $v\in V$ be such that $T_{d}\cdots T_{1}v\neq 0$. Then by Proposition \ref{proprango}, the set $\corchete{v,T_1{v},T_{2}T_{1}v,\dots,T_{d}\cdots T_{1}v}$ would be a linearly independent set of cardinality greater than $\dim{V}$. A contradiction.
\end{proof}

For the next steps in our proof we need some fact about exterior powers. Recall that if $V$ is a vector space of dimension $d$, the \textit{r-fold exterior power} $\Wedge^{r}{V}$ is the vector space of alternating $r$-linear forms on the dual space $V^*$ (see e.g. \cite[XIX.1]{langalg}). Given a basis $\corchete{v_1,\dots, v_d}$ of $V$, the set 
$\corchete{v_{i_1}\wedge \cdots \wedge v_{i_r} : 1\leq i_1<\dots <i_r\leq d}$ is a basis of $\Wedge^{r}{V}$. Hence $\dim {\Wedge^{r}V}=\binom{d}{r}$. 

The exterior power also induces a map $\Wedge^{r}: \End(V) \rightarrow \End(\Wedge^{r}{V})$ given by the linear extension of $(\Wedge^{r}T)(w_1\wedge \cdots \wedge w_r)=(Tw_1\wedge \cdots \wedge Tw_r)$. This map is functorial: The relation $\Wedge^{r}(ST)=\Wedge^{r}(S)\Wedge^{r}(T)$ holds for all $S,T \in \End(V)$. This functor also induces a map $\Wedge^{r}: \Nil(V)\rightarrow \Nil(\Wedge^{r}{V})$ that extends to $\Nil^{n}(V)\rightarrow \Nil^{n}(\Wedge^{r}{V})$ for all $n\geq 1$.

Another important fact is that, when $T\in \Nil(V)$ and $\rank(T)=r>0$, then $\rank(\Wedge^{r}{T})=1$. This is because the image of $\Wedge^{r}{T}$ is generated by any $r$-form associated to the $r$-dimensional subspace $T(V)$. This remark is crucial in the end of our proof.

\begin{lema}\label{lemaa}
Let $1\leq r\leq d$ and $m=\binom{d}{r}$. Given $(T_1,\dots,T_m)\in \Nil^{m}(V)$, with $\rank(T_j) \leq r$ for all $1\leq j\leq m-1$, we have $\rank(T_{m}\cdots T_{1})<r$. 
\end{lema}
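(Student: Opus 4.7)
The plan is to pass to the $r$-fold exterior power and reduce everything to Corollary \ref{cororango}. Set $W = \Wedge^{r}V$, which has dimension $C = \binom{d}{r}$, and consider the $C$-tuple $S_j = \Wedge^{r}T_j \in \End(W)$ for $1\leq j \leq C$.

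First I would verify that $(S_1,\dots,S_C)\in \Nil^{C}(W)$. This is immediate from the functoriality of the exterior power: each subproduct $T_{\beta}\cdots T_{\alpha}$ is nilpotent by hypothesis, and $S_{\beta}\cdots S_{\alpha} = \Wedge^{r}(T_{\beta}\cdots T_{\alpha})$ is therefore also nilpotent, as noted in the paragraph preceding the lemma.

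Next I would bound the ranks of the $S_j$'s for $j\leq C-1$. Each such $T_j$ is nilpotent (take $i=j$ in the definition of $\Nil^{C}(V)$) and satisfies $\rank(T_j)\leq r$. If $\rank(T_j) < r$, then $\Wedge^{r}T_j = 0$; and if $\rank(T_j) = r$, then by the remark that an $r$-form associated with the $r$-dimensional subspace $T_j(V)$ generates $\Ima(\Wedge^{r}T_j)$, we get $\rank(\Wedge^{r}T_j) = 1$. In either case $\rank(S_j)\leq 1$. Hence $(S_1,\dots,S_C)$ is exactly the kind of tuple to which Corollary \ref{cororango} applies, using $W$ in place of $V$ and $C=\dim W$ in place of $d$. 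The corollary then yields $S_{C}\cdots S_{1}=0$, i.e. $\Wedge^{r}(T_{C}\cdots T_{1})=0$. Finally, since $\Wedge^{r}$ of a linear map vanishes precisely when the map has rank strictly less than $r$, this gives $\rank(T_{C}\cdots T_{1}) < r$, as required.

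The main obstacle to spot is the initial reduction: one must recognize that the dimension $\binom{d}{r}$ of $\Wedge^{r}V$ matches the length $C$ of the tuple, and that nilpotence converts the rank-$\leq r$ hypothesis into a rank-$\leq 1$ hypothesis on the exterior-power side. Once this alignment is noticed, the proof is essentially an application of Corollary \ref{cororango}, with functoriality doing the bookkeeping.
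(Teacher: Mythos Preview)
Your proof is correct and follows essentially the same approach as the paper: pass to $\Wedge^{r}V$, check that the tuple $(\Wedge^{r}T_1,\dots,\Wedge^{r}T_C)$ lands in $\Nil^{C}(\Wedge^{r}V)$ with the rank-$\leq 1$ condition needed for Corollary~\ref{cororango}, and read off $\rank(T_C\cdots T_1)<r$ from $\Wedge^{r}(T_C\cdots T_1)=0$. Your write-up is in fact a bit more explicit than the paper's, which compresses the rank verification into a single clause.
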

\begin{proof}
If that is the case then we will have $\rank(T_{j}T_{j-1}\cdots T_{i})\leq r$ for all $1\leq i\leq j\leq m-1$. Then the tuple $(\Wedge^{r}{T_1},\dots,\Wedge^{r}{T_m})\in \Nil^{m}(\Wedge^{r}{V})$ will satisfy the hypothesis of Corollary \ref{cororango}, and hence $\Wedge^{r}(T_m\cdots T_1)=0$, which implies that $\rank(T_m\cdots T_1 )<r$. 
\end{proof}

\begin{proof}[Proof of Theorem \ref{nilpotentes}]
Let $1\leq l< d$ and $r(l)= \binom{d}{1}\cdots \binom{d}{l}$. We claim that for all $(T_1,\dots,T_{r(l)})\in \Nil^{r(l)}(V)$ we have $\rank(T_{r(l)} \cdots T_{1})<d-l$. If so, the result follows with $N=r(d-1)=\binom{d}{1}\cdots \binom{d}{d-1}$.

We will argue by induction. The case $l=1$ is Lemma \ref{lemaa} with $r=d-1$. Now, assume the result for some $l<d$, and for $1\leq j \leq \binom{d}{l+1}$, define $\hat{T}_j=T_{r(l)j}\cdots T_{r(l)(j-1)+1}$. Then $(\hat{T}_1,\dots,\hat{T}_{\binom{d}{l+1}})\in \Nil^{\binom{d}{l+1}}(V)$, and by our inductive hypothesis, we obtain $\rank(\hat{T}_j)\leq d-l-1$. So, we are in the assumption of \ref{lemaa} with $r=d-l-1$ and we conclude that $\rank(T_{r(l+1)}\cdots T_{1})=\rank(\hat{T}_{\binom{d}{l+1}}\cdots \hat{T}_{1})<d-l-1$. This proves the claim and concludes the proof of the theorem.  
\end{proof}

\subsection{Some computations in low dimension}\label{computos}
Let $N(d)$ be the least value of $N$ for which Theorem \ref{nilpotentes} (and therefore also Theorem \ref{desigualdad}) holds true. From the proof of Theorem \ref{nilpotentes}, we can obtain the bound $N(d)\leq \binom{d}{1}\binom{d}{2}\cdots \binom{d}{d-1}$ for all $d$. Also, since for all $d$ we can construct a matrix $A\in \mathcal{N}_{d}(k)$ of rank $d-1$, the tuple $(A,\dots,A)\in \mathcal{N}^{d-1}(k^{d})$ satisfies $A^{d-1}\neq 0$ and hence we have the lower bound $N(d)\geq d$. In particular, we conclude that $N(2)=2$, and for higher dimensions we get the bounds $3\leq N(3)\leq 9$ and $4\leq N(4)\leq 96$. We end this section by finding a better bound for $N(3)$.

\begin{prop}\label{casotres}
For any field $k$, we have $N(3)\leq 5$. In addition, if $\Char{k}\neq 2$, then $N(3)=5$.
\end{prop}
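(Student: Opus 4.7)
The argument splits into the universal upper bound $N(3) \leq 5$ and the matching lower bound $N(3) \geq 5$ when $\Char k \neq 2$.

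For the upper bound, my plan is to refine the rank analysis behind Theorem~\ref{nilpotentes}. For any $(A_1,\dots,A_5) \in \Nil_3^5(k)$, Lemma~\ref{lemaa} (with $d=3$, $r=2$) shows that each triple $B_j := A_{j+2} A_{j+1} A_j$, $j = 1, 2, 3$, has rank at most one and is nilpotent. If any $B_j = 0$, the factorizations $A_5 \cdots A_1 = A_5 A_4 B_1 = A_5 B_2 A_1 = B_3 A_2 A_1$ immediately give the conclusion, so assume each $B_j = u_j \otimes \phi_j$ with $\phi_j(u_j) = 0$. Matching the two rank-one expressions for each of $A_4 A_3 A_2 A_1$ and $A_5 A_4 A_3 A_2$ produces scalars $\lambda, \mu$ with $A_4 u_1 = \lambda u_2$, $A_5 u_2 = \mu u_3$, $\phi_2 \circ A_1 = \lambda \phi_1$, and $\phi_3 \circ A_2 = \mu \phi_2$ (with $\lambda, \mu \neq 0$ unless the full product already vanishes), while the nilpotence of every $4$- and $5$-fold subproduct yields the orthogonalities $\phi_i(u_j) = 0$ for all $1 \leq i \leq j \leq 3$. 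In particular $u_1, u_2, u_3 \in \ker \phi_1$, a subspace of dimension at most two.

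A short case analysis then concludes the upper bound: if some consecutive pair $u_i, u_{i+1}$ is collinear, $A_{i+3} u_i$ is a scalar multiple of $u_i$, so $u_i$ becomes an eigenvector of the nilpotent $A_{i+3}$ and its eigenvalue — hence $\lambda$ or $\mu$ — must be zero; otherwise $u_1, u_2, u_3$ are pairwise independent in a $2$-dimensional subspace, so $u_3 = \alpha u_1 + \beta u_2$ with $\alpha, \beta \neq 0$, and $\phi_2(u_3) = 0$ forces $\phi_2(u_1) = 0$, hence $\phi_2$ is parallel to $\phi_1$; substituting into $\phi_2 \circ A_1 = \lambda \phi_1$ makes $\phi_1 \circ A_1$ parallel to $\phi_1$, so $\phi_1$ is a left eigenvector of $A_1$ whose eigenvalue must vanish by nilpotence of $A_1$, forcing $\lambda = 0$ and contradicting the standing assumption. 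In every case $A_5 \cdots A_1 = 0$.

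For the lower bound $N(3) \geq 5$ in characteristic $\neq 2$, I plan to construct an explicit $(A_1, A_2, A_3, A_4) \in \Nil_3^4(k)$ with $A_4 A_3 A_2 A_1 \neq 0$. The key observation is that the obstruction in the upper-bound argument essentially used the third triple $B_3$; with only $B_1, B_2$ available one may pick linearly independent $u_1, u_2 \in k^3$ and non-parallel covectors $\phi_1, \phi_2$ satisfying $\phi_1(u_1) = \phi_1(u_2) = \phi_2(u_2) = 0$ and $\phi_2(u_1) \neq 0$, and then engineer $A_1, A_2, A_3, A_4$ realizing $A_3 A_2 A_1 = u_1 \otimes \phi_1$, $A_4 A_3 A_2 = u_2 \otimes \phi_2$, and $A_4 u_1 \in \mathrm{span}(u_2) \setminus \{0\}$. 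The role of $\Char k \neq 2$ should enter when checking the nilpotence of an intermediate subproduct (typically $A_2 A_1$ or $A_3 A_2$): a trace-squared condition will take the form $2c = 0$ for some generically nonzero quantity $c$, automatic in characteristic two but otherwise requiring a delicate balancing of parameters. The main obstacle I expect is carrying out this construction explicitly, since the nine nilpotence conditions on the subproducts interact rigidly and leave very little slack.
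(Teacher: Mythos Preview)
Your upper-bound argument is correct and is genuinely different from the paper's. The paper isolates a clean intermediate statement: if $(C,B,A)\in\Nil^{3}(k^{3})$ with $\rank B\leq 1$ then $ABC=0$ (this is Corollary~\ref{cororango3}, resting on Lemma~\ref{lema3}); applying it to the triple $(A_5,\,A_4A_3A_2,\,A_1)$ immediately gives $A_5\cdots A_1=0$. Your route instead tracks all three rank-one maps $B_1,B_2,B_3$ simultaneously and extracts the contradiction from left/right eigenvector considerations. Both are short; the paper's is more modular, yours is more self-contained. One small slip: you assert that $u_1,u_2,u_3$ are \emph{pairwise} independent and that $\beta\neq 0$, but the case split only rules out collinearity of \emph{consecutive} pairs. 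This is harmless, since your argument after that point uses only $\alpha\neq 0$.

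The lower bound, however, is a genuine gap. You outline heuristics for how an example might be built and correctly identify that $\phi_2(u_1)\neq 0$ is the way to escape the obstruction from the upper-bound analysis, but you do not produce matrices, and you yourself flag the explicit construction as ``the main obstacle''. The paper simply exhibits a concrete quadruple $(D,C,B,A)$ of integer matrices lying in $\Nil^{4}(k^{3})$ with $ABCD\neq 0$; the characteristic polynomials of all ten subproducts are $x^{3}$ over $\Z$, so nilpotence holds in every characteristic, while every entry of $ABCD$ is even and at least one is not divisible by $4$, so $ABCD\neq 0$ precisely when $\Char k\neq 2$. Your speculation that the characteristic-$2$ restriction arises from a ``trace-squared condition $2c=0$'' on some intermediate subproduct is not how it actually enters. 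To complete the proof you must either produce an explicit example (easiest) or turn your heuristics into an existence argument; as it stands, the second half of the proposition is not proved.
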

To prove this, we need a lemma:
\begin{lema}\label{lema3}
Let $(C,B,A)\in \mathcal{N}^{3}(k^{3})$. If $\rank{B}=1$, then $AB=\lambda B$ or $BC=\lambda B$ for some $\lambda \in k$.
\end{lema}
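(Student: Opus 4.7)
The plan is to exploit the rank-one nilpotent structure of $B$ by factoring it as $B = v w^T$, translate the nilpotency of the other subproducts into scalar conditions on $v$ and $w$, and then perform a quick dimension count in $k^3$.

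First I would write $B = v w^T$ for nonzero column vectors $v, w \in k^3$ satisfying $w^T v = 0$; such a factorization exists because $B$ has rank one and is nilpotent (the identity $w^T v = 0$ is just $B^2 = 0$). The three remaining subproducts $AB = (Av)\, w^T$, $BC = v\,(w^T C)$, and $ABC = (Av)(w^T C)$ all have rank at most one, so each is nilpotent exactly when its trace vanishes. This turns the hypothesis $(C,B,A) \in \mathcal{N}^3(k^3)$ into the three scalar identities $w^T A v = 0$, $w^T C v = 0$, and $w^T C A v = 0$. Equivalently, the two-dimensional hyperplane $H = \ker w \subseteq k^3$ contains all four vectors $v$, $Av$, $Cv$, and $CAv$.

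Now I would split into two cases. If $Av \in \mathrm{span}(v)$, then $Av = \lambda v$ for some $\lambda \in k$ and $AB = \lambda B$, giving the first alternative. Otherwise $\{v, Av\}$ is a basis of the $2$-dimensional subspace $H$; since both $Cv$ and $CAv$ lie in $H$, it follows that $H$ is $C$-invariant. Invariance of the hyperplane $\ker w$ under $C$ is equivalent to $w^T C = \mu w^T$ for some $\mu \in k$, and then $BC = v\,(\mu w^T) = \mu B$, giving the second alternative.

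The substantive step of the argument is the observation that $v$, $Av$, $Cv$, $CAv$ all lie in the fixed $2$-dimensional subspace $\ker w$; from there everything falls out of the dichotomy ``either $\{v, Av\}$ is linearly dependent, or it spans $H$.'' I do not foresee any real obstacle. It is worth noting that the nilpotency of $A$ and $C$ individually is never used in the argument, which reflects the fact that in the product $ABC$ only $B$ plays a distinguished role in the hypothesis of the lemma.
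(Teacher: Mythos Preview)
Your proof is correct and rests on the same underlying idea as the paper's---a rank-one matrix is nilpotent exactly when its trace vanishes, so the hypotheses become three scalar conditions that force a dichotomy---but the presentation is genuinely different. The paper fixes coordinates so that $B=\left(\begin{smallmatrix}0&0&0\\0&0&1\\0&0&0\end{smallmatrix}\right)$, writes $A$ and $C$ as generic matrices, and reads off $\Tr(AB)=h$, $\Tr(BC)=w$, $\Tr(ABC)=bv$ (here $b,h$ are entries of $A$ and $v,w$ entries of $C$); the conclusion then follows from $bv=0$ in a field, forcing $b=0$ (so $AB=eB$) or $v=0$ (so $BC=xB$). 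Your argument replaces this explicit computation with the coordinate-free factorization $B=vw^{T}$ and the geometric observation that $v,\,Av,\,Cv,\,CAv$ all lie in the two-dimensional hyperplane $\ker w$, deriving the dichotomy from a dimension count (either $Av\in\mathrm{span}(v)$, or $\{v,Av\}$ spans $\ker w$ and $C$-invariance follows). The two arguments are in exact correspondence under the dictionary $v=e_{2}$, $w=e_{3}$: your first case is the paper's $b=0$, and your $C$-invariance of $\ker w$ is the paper's $v=0$. Your route makes the structure more transparent and basis-independent; the paper's is a terse two-line computation once coordinates are chosen. Your closing remark that the individual nilpotency of $A$ and $C$ is never used is also true of the paper's proof.
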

\begin{proof}
Assume that $B=\begin{pmatrix}
0 & 0 & 0\\
0 & 0 & 1\\
0 & 0 & 0\\
\end{pmatrix}$, $A=\begin{pmatrix}
a & b & c\\
d & e & f\\
g & h & i\\
\end{pmatrix}$ and $C=\begin{pmatrix}
p & q & r\\
s & t & u\\
v & w & x\\
\end{pmatrix}$. Then 
$AB=\begin{pmatrix}
0 & 0 & b\\
0 & 0 & e\\
0 & 0 & h\\
\end{pmatrix}$ and $BC=\begin{pmatrix}
0 & 0 & 0\\
v & w & x\\
0 & 0 & 0\\
\end{pmatrix}$. The nilpotence of $AB$ and $BC$ implies $h=\Tr{AB}=w=\Tr{BC}=0$. Then $ABC=\begin{pmatrix}
bv & 0 & bx\\
ev & 0 & ex\\
0 & 0 & 0\\
\end{pmatrix}$, and by the nilpotence of $ABC$, $bv=\Tr{ABC}=0$.
The case $b=0$ is $AB=eB$ and the case $v=0$ is $BC=xB$.
\end{proof}

\begin{coro}\label{cororango3}
If $(C,B,A) \in \mathcal{N}^{3}(k^{3})$ and $\rank(B)\leq 1$, then $ABC=0$. 
\end{coro}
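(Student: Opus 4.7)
The plan is to split on whether $B$ vanishes. If $B=0$, then $ABC=0$ trivially, so I will assume $\rank(B)=1$ and invoke Lemma \ref{lema3}. This yields a scalar $\lambda\in k$ with either $AB=\lambda B$ or $BC=\lambda B$. The key step I would then add is to force $\lambda=0$ by using nilpotence of the ``outer'' factor.

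In the first subcase, since $B\neq 0$, I would choose some $v\in k^3$ with $w:=Bv\neq 0$. The identity $AB=\lambda B$ immediately gives $Aw=\lambda w$, so $\lambda$ is an eigenvalue of $A$ with eigenvector $w\in k^3$. But $A$ is one of the coordinates of $(C,B,A)\in \mathcal{N}^3(k^3)$, so $A$ is nilpotent, say $A^m=0$; applying $A^{m-1}$ repeatedly to $Aw=\lambda w$ yields $\lambda^m w=0$, forcing $\lambda=0$. Hence $AB=0$, and a fortiori $ABC=0$. In the second subcase I would argue symmetrically on the left: since $B\neq 0$ it has a nonzero row $u$, and $BC=\lambda B$ restricted to that row reads $uC=\lambda u$, exhibiting $\lambda$ as an eigenvalue of the nilpotent matrix $C$, hence $\lambda=0$, $BC=0$, and again $ABC=0$.

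I do not expect any serious obstacle here, since Lemma \ref{lema3} has already done the work: the only additional input is the elementary fact that a nilpotent endomorphism has no nonzero eigenvalues (over any field), which follows just from $A^m=0$ and does not need algebraic closedness. The one small check I would make explicit is that $A$ and $C$ — not only the length-two and length-three subproducts — are individually nilpotent, which is automatic from the definition of $\mathcal{N}^3(k^3)$ applied to the singleton subproducts $A_i\cdots A_i$.
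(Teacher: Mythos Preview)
Your argument is correct, and it is in fact more direct than the paper's. Both proofs begin by invoking Lemma~\ref{lema3}, but they diverge from there. The paper, in the branch $BC=\lambda B$ with $\lambda\neq 0$, rescales $C$ so that $BC=B$, rewrites $ABC=A(BC)C=ABC^{2}$, and then applies Corollary~\ref{cororango} to the triple $(C^{2},B,A)\in\mathcal{N}^{3}(k^{3})$ (using that a nilpotent $C\in M_{3}(k)$ has $\rank C^{2}\le 1$) to conclude $ABC^{2}=0$. Your route avoids Corollary~\ref{cororango} entirely: the relation $AB=\lambda B$ (respectively $BC=\lambda B$) with $B\neq 0$ exhibits $\lambda$ as an eigenvalue of the nilpotent matrix $A$ (respectively $C$), forcing $\lambda=0$, so that already $AB=0$ or $BC=0$. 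This is more elementary, handles the two branches of Lemma~\ref{lema3} symmetrically (the paper's written proof only treats the $BC=\lambda B$ branch explicitly), and actually yields the sharper conclusion $AB=0$ or $BC=0$; in hindsight the case $\lambda\neq 0$ that the paper's $C^{2}$ trick is designed for never occurs.
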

\begin{proof}
Assume that $\rank(B)=1$, and $AB, BC\neq 0$. By Lemma \ref{lema3} and after rescaling $A$ or $C$, we may suppose that $BC=B$ or $AB=B$. In the first case we will have $(C^2,B,A) \in \mathcal{N}^{3}(k^{3})$, and by Corollary \ref{cororango}, $ABC=A(BC)C=ABC^{2}=0$. For the case $AB=B$, applying a similar argument to the tuple $(A^{t},B^{t},C^{t})$ of the transposes of $A,B,C$, we will obtain $(ABC)^{t}=C^{t}B^{t}(A^{t})^{2}=0$, and hence $ABC=0$.
\end{proof}

\begin{proof}[Proof of Proposition \ref{casotres}]
Let $(E,D,C,B,A)\in \Nil^{5}(k^{3})$. Then $(E,BCD,A)$ belongs to $\Nil^{3}(k^{3})$, and by Lemma \ref{lemaa} with $d=3, r=2$, $\rank(BCD)\leq 1$. Then, by Corollary \ref{cororango3}, $ABCDE=0$ and $N(3)\leq 5$. 
Moreover, when $\Char{k}\neq 2$, it is a straightforward computation to show that $(D,C,B,A)\in \Nil^{4}(k^{3})$, with
\small\begin{equation}
\hspace{-1mm}A=\begin{pmatrix}
-2 & -6 & 1\\
3 & 9 & 16\\
-1 & -3 & -7\\
\end{pmatrix},B=\begin{pmatrix}
0 & 1 & 0\\
0 & 0 & 1\\
0 & 0 & 0\\
\end{pmatrix},C=\begin{pmatrix}
1 & 1 & 0\\
1 & 1 & 4\\
-1 & -1 & -2\\
\end{pmatrix},D=\begin{pmatrix}
-1 & 3 & 16\\
1 & -3 & -16\\
1 & 2 & 4\\
\end{pmatrix} \notag
\end{equation}
and $ABCD=\begin{pmatrix}
4 & 8 & 16\\
-6 & -12 & -24\\
2 & 4 & 8\\
\end{pmatrix}\neq 0$.
\end{proof}

\begin{obs} This last proposition shows that, in general we cannot expect $N(d)=d$. For that reason, the hypothesis of Theorem \ref{nilpotentes} does not imply any kind of simultaneous triangularization or nilpotency of the semigroup generated by sequences in $\mathcal{N}^{n}(k^{d})$. In fact, it is not hard to prove that the matrices in the last example we gave in $\mathcal{N}^{4}(k^{3})$ are not simultaneously triangularizable, since $A$ and $B$ do not have a common invariant subspace of dimension $1$.
\end{obs}
\section{A polynomial identity}\label{secciontres}

For the proof of Theorem \ref{desigualdad} we need some notation. Let $k$ be a field with algebraic closure $\overline{k}$. For $d,N\in \N$, consider $Nd^2$ variables $x_{i,j}$ with $1\leq i\leq N$, $1 \leq j \leq d^{2}$ and let $R_{d,N}$ be the polynomial ring $k[x_{i,j}]$. If $A_1,\dots,A_N \in M_{d}(\overline{k})$ and $f\in R_{d,N}$, by $f(A_1,\dots,A_N)$ we mean the element $f((a_{i,j})_{i,j})$ where $(a_{i,j})_{j}$ are the coefficients of $A_i$ in some fixed order.

Recall that a polynomial $f\in k[y_1,\dots,y_m]$ is \textit{homogeneous} of degree $\lambda\geq 0$ if it is of the form $\sum_{i_1+\dots+i_m=\lambda}{c_{i_1\dots i_m}y_{1}^{i_1}\dots y_{m}^{i_m}}$ for some $c_{i_1\dots i_m}\in k$, $i_1,\dots,i_m\geq 0$. We say that monomial $f\in R_{d,N}$ is \textit{multihomogeneous} of \textit{multidegree} $\mdeg{f}=(\lambda_1,\dots,\lambda_N) \in {\N}^{N}$ if it is of the form $f((x_{i,j})_{i,j})=c\prod_{i,j}x^{u_{i,j}}_{i,j}$, where $c\in k$, $u_{i,j}\geq 0$ and $\sum_{j}{u_{i,j}}=\lambda_{i}$ for all $1\leq i\leq N$, and that a polynomial $p\in R_{d,N}$ is multihomogeneous of multidegree $\mdeg{p}$ if it is a finite sum of multihomogeneous monomials of multidegree $\mdeg{p}$. This is equivalent to say that, for each $1\leq i\leq N$, $p$ is homogeneous of degree $\lambda_i$ in the variables $x_{i,1} \dots x_{i,d^2}$. Note there is a direct sum decomposition \begin{equation}\label{directsum}
R_{d,N}=\bigoplus_{\lambda \in {\N}^{N}}{R_{d,N,\lambda}},
\end{equation} where $R_{d,N,\lambda}$ denotes the vector space of multihomogeneous polynomials of multidegree $\lambda$.

For $1\leq j\leq d^2$ denote by $f_{j}$ the polynomial in $R_{d,N}$ representing the map that sends the $N$-tuple $(A_1,\dots,A_N)\in \overline{k}^{Nd^2}$ to the $j$-th entry of $A_N\cdots A_1$. Also, for $1\leq \ell \leq d$ and $1\leq \alpha \leq \beta \leq N$, let $T^{\ell}_{\alpha,\beta} \in R_{d,N}$ be the polynomial that represents the map $(A_1,\dots,A_N)\mapsto \Tr{\Wedge^{\ell}(A_{\beta}\cdots A_{\alpha})}$. 

It is not hard to see that $f_{j}$ are multihomogeneous of multidegree $(1,1,\dots,1,1)$ and that $T_{\alpha,\beta}^{\ell}$ are multihomogeneous of multidegree $(0,\dots,0,\ell,\dots,\ell,0,\dots,0)$, with the $\ell$'s in positions $\alpha,\alpha+1,\dots,\beta$.

Our purpose is to prove the following:
\begin{teo}\label{traduccion}
If $N=N(d)$ is given by Theorem \ref{nilpotentes}, there is some $r\leq (Nd+1)^{Nd^2+2}$ such that for all $1\leq j\leq d^2$ there exist multihomogeneous polynomials $p^{\alpha,\beta}_{j,\ell} \in R_{d,N}$ of multidegree $r\mdeg{f_{j}}-\mdeg{T^{\ell}_{\alpha,\beta}}\in {\N}^{N}$ such that 
\begin{equation}\label{idpolinomial}
    (f_{j})^{r}=\sum_{\alpha,\beta,\ell}{p^{\alpha,\beta}_{j,\ell}T^{\ell}_{\alpha,\beta}}.
\end{equation}
\end{teo}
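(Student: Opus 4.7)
The plan is to deduce the polynomial identity from Theorem \ref{nilpotentes} via Hilbert's Nullstellensatz, and then extract the correct multihomogeneous components.

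First I would identify the vanishing locus of the $T^\ell_{\alpha,\beta}$. Recall that a matrix $A\in M_d(\overline{k})$ is nilpotent if and only if its characteristic polynomial is $X^d$, which happens if and only if all the elementary symmetric polynomials in its eigenvalues vanish; equivalently, $\Tr(\Wedge^\ell A)=0$ for every $1\leq \ell\leq d$. Consequently, the algebraic set
\[
Z=\{(A_1,\dots,A_N)\in \overline{k}^{Nd^2}\,:\, T^{\ell}_{\alpha,\beta}(A_1,\dots,A_N)=0 \text{ for all } \ell,\alpha,\beta\}
\]
is exactly $\Nil_{d}^{N}(\overline{k})$. By Theorem \ref{nilpotentes} (applied with $N=N(d)$), the polynomials $f_{j}$ representing the entries of $A_N\cdots A_1$ all vanish on $Z$. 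The Nullstellensatz, applied in $\overline{k}[x_{i,j}]$, then yields an exponent $r_0$ with $(f_j)^{r_0}\in (T^{\ell}_{\alpha,\beta})$ in $\overline{k}[x_{i,j}]$ for every $j$.

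Next I would descend to $k[x_{i,j}]$ and cope with the multihomogeneity. Writing any relation with $\overline{k}$-coefficients in terms of a $k$-basis of $\overline{k}$ containing $1$, and comparing the $1$-component, produces an identity $(f_j)^{r_0}=\sum q_{\alpha,\beta,\ell}^{j}\, T^{\ell}_{\alpha,\beta}$ already with $q_{\alpha,\beta,\ell}^j\in R_{d,N}$; this works because $f_j$ and $T^\ell_{\alpha,\beta}$ lie in $k[x_{i,j}]$, so the extension of coefficients does not enlarge the ideal contraction. Now fix $r=\max(r_0,d)$, so that the multidegree $r\deg f_j - \deg T^{\ell}_{\alpha,\beta}=(r,\dots,r)-(0,\dots,0,\ell,\dots,\ell,0,\dots,0)$ has all entries non-negative; the same relation is obtained for $r$ by multiplying by a suitable power of $f_j$.

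The final step is a multigraded clean-up. Decompose each coefficient $q_{\alpha,\beta,\ell}^j$ as a finite sum of multihomogeneous parts, $q^{j}_{\alpha,\beta,\ell}=\sum_{\mu}q^{j}_{\alpha,\beta,\ell,\mu}$ with $\deg q^{j}_{\alpha,\beta,\ell,\mu}=\mu$. Plugging this into the identity and grouping terms by total multidegree, only the summand with $\mu=r\deg f_j -\deg T^{\ell}_{\alpha,\beta}$ contributes to the multihomogeneous component of degree $r\deg f_j =(r,\dots,r)$ on the right-hand side, while the left-hand side $(f_j)^r$ is itself multihomogeneous of that same degree. Setting
\[
p^{\alpha,\beta}_{j,\ell} := q^{j}_{\alpha,\beta,\ell,\, r\deg f_j -\deg T^{\ell}_{\alpha,\beta}}
\]
gives \eqref{idpolinomial} with the required multihomogeneity of each $p^{\alpha,\beta}_{j,\ell}$.

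I expect the Nullstellensatz step and the nilpotence-by-traces characterization to be the conceptual heart, while the real care lies in the last paragraph: justifying that the multihomogeneous components of a representation in a multihomogeneously generated ideal can be isolated so that the degree identity $\deg p^{\alpha,\beta}_{j,\ell}=r\deg f_j-\deg T^{\ell}_{\alpha,\beta}$ holds exactly. Once that graded extraction argument is written out properly, the rest is bookkeeping.
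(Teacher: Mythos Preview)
Your argument is correct and more direct than the paper's. Both proofs hinge on the same two observations: the common zero locus of the $T^{\ell}_{\alpha,\beta}$ is $\Nil_{d}^{N}(\overline{k})$, and by Theorem~\ref{nilpotentes} every $f_j$ vanishes there. The difference lies in how the Nullstellensatz is applied and how the correct multihomogeneous degrees are recovered. The paper passes through the Segre embedding $\hat\varphi:(\mathbb{P}^{d^2-1})^N\to\mathbb{P}^{(d^2)^N-1}$, manufactures single-graded homogeneous polynomials $S^{\ell}_{\alpha,\beta,\gamma}$ and $g_j$ on the target, invokes the \emph{projective} Nullstellensatz there, matches ordinary homogeneous degrees, and then pulls back along $\varphi$ to obtain the multihomogeneous identity. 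You instead apply the \emph{affine} Nullstellensatz directly in $\overline{k}[x_{i,j}]$, descend to $k[x_{i,j}]$ by projecting onto the $1$-component of a $k$-basis of $\overline{k}$, and then isolate the multihomogeneous component of degree $(r,\dots,r)$ on both sides, using that $R_{d,N}$ is $\Z^{N}$-graded and the generators $T^{\ell}_{\alpha,\beta}$ are multihomogeneous. Your route avoids the Segre machinery entirely and the graded-component extraction is a one-line argument once stated; the paper's route has the virtue of reducing everything to the classical single-graded projective statement, at the cost of introducing the auxiliary polynomials $S^{\ell}_{\alpha,\beta,\gamma}$ and the ideal $J$ of the Segre variety. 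Your precaution $r\ge d$ is harmless but not strictly needed: if some entry of $r\deg f_j-\deg T^{\ell}_{\alpha,\beta}$ were negative, the corresponding $p^{\alpha,\beta}_{j,\ell}$ would simply be zero and the identity would still hold.
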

The natural tool to prove this result is Hilbert's Nullstellensatz. If $I\subset k[y_1,\dots,y_m]$ is an ideal, let $Z(I)$ be its zero locus in ${\overline{k}}^m$. Also, for $Z\subset \overline{k}^m$, let $I(Z)\in k[y_1,\dots,y_m]$ be the ideal of polynomials $f$ that vanish on $Z$. The effective version of  Nullstellensatz that we will use comes from applying Rabinowitsch's proof of Nullstellensatz by assuming weak Nullstellensatz (see e.g. \cite[Sec.~1.7]{fulton}) and an effective version of weak Nullstellensatz \cite{som}):

\begin{teo}[Effective Nullstellensatz]\label{effnul} If $I=\subset k[y_1,\dots,y_m]$ is an ideal, then $I(Z(I))$ is equal to $\sqrt{I}$, the radical ideal of $I$. Moreover, if $g\in \sqrt{I}$ and $I$ is generated by polynomial $f_1,\dots,f_s$ satisfying $\max(\deg{f_1},\dots,\deg{f_s},\deg{g})=k$, then there is some $r\leq (k+1)^{m+2}$ and $p_1,\dots,p_s\in k[y_1,\dots,y_m]$ such that
	\begin{equation*}
	g^r=p_1f_1+p_2f_2+\dots+p_sf_s.
	\end{equation*} 
\end{teo}
\begin{proof}[Proof of Theorem \ref{traduccion}]
Let $I\subset R_{d,N}$ be the ideal generated by the polynomials $T_{\alpha,\beta}^{\ell}$ and let $W=Z(I)$.
Note that for a matrix $A$ of order $d\times d$, the non leading coefficients of its characteristic polynomial are precisely $(-1)^{\ell}\Tr{\Wedge^{\ell}(A)}$, with $1\leq \ell \leq d$. By this observation, the set $W$ is precisely the set of $N$-tuples $(A_1,\dots,A_N)\in \mathcal{N}_{d}^{N}(\overline{k})$. Hence, by our choice of $N$, Theorem \ref{nilpotentes} guarantees us that $f_{j}(P)=0$ for all $P\in W$. Then Nullstellensatz applies and $f_{j}\in I(Z(W))=\sqrt{I}$.

Since $\max(\deg{f_j},\deg{T_{\alpha,\beta}^{\ell}})=Nd$, by Theorem \ref{effnul} there is some $r\leq (Nd+1)^{Nd^2+2}$ and polynomials $p_{j,\ell}^{\alpha,\beta,\gamma}\in R_{d,N}$ satisfying \eqref{idpolinomial} for all $j$. Finally, by the direct sum decomposition \eqref{directsum} and by comparing multidegrees, we may assume that $p^{\alpha,\beta,\gamma}_{j,\ell}$ are multihomogeneous of multidegree $r\mdeg{f_{j}}-\mdeg{T^{\ell}_{\alpha,\beta,\gamma}}$.
\end{proof}

\section{Proof of Theorem \ref{desigualdad}}\label{pruebadesigualdad}

Theorem \ref{traduccion} is the fundamental relation that we will need to prove inequality \eqref{eqdesigualdad}.

For the next we will assume that $k$ is a \textit{local field}. That is, a field together with an absolute value $|.|:k\rightarrow \R^{+}$ that induces a non-discrete locally compact topology on $k$ via the induced metric. Examples of these include $\R,\C$ with the standard absolute values and fields of $p$-adic numbers $\mathbb{Q}_{p}$ for a prime $p$. For more information about local fields, see \cite{lorentz}.

We will work on the finite dimensional vector space $k^d$, where $k$ is a local field with absolute value $|.|$. In this situation, we consider the norm on $M_{d}(k)$ given by $\|A\|_{0}=\max_{1\leq j\leq d^2}{|a_{j}|}$, where $a_j$ are the entries of $A$. Since the absolute value on $k$ extends in a unique way to an absolute value on $\overline{k}$ (see Lang's Algebra \cite[XII.2, Prop.~2.5]{langalg}), the spectral radius of a matrix $A\in M_{d}(k)$ is then defined in the usual way. 
The \emph{height} $h(f)$ of a polynomial $f\in k[y_1,\dots,y_m]$ is defined as the logarithm of the maximum modulus of its coefficients.

We begin with a lemma.

\begin{lema}\label{cotahomo}If $f \in R_{d,N}$ is a multihomogeneous polynomial of multidegree $(\lambda_1,\dots,\lambda_{N})$ and height $h(f)\leq H$, then 
\begin{equation}
|f(A_1,\dots,A_N)|\leq e^H\prod_{i=1}^{N}{\binom{d^2-1+\lambda_i}{\lambda_i}}\|A_1\|_{0}^{\lambda_1}\cdots\|A_{N}\|_{0}^{\lambda_N} \notag 
\end{equation}
for all $A_1,\dots,A_N\in M_{d}(k)$.
\end{lema}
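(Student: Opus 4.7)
The plan is to reduce to the case of a single multihomogeneous monomial and then bound each factor by the sup-norm of the corresponding matrix. First I would write $f$ as a finite sum of multihomogeneous monomials of degree $(\lambda_1,\dots,\lambda_N)$, say
\[
f = \sum_{\nu} c_\nu \prod_{i,j} x_{i,j}^{u_{i,j}^{(\nu)}}, \qquad \sum_{j} u_{i,j}^{(\nu)} = \lambda_i \text{ for every } i \text{ and every } \nu,
\]
with $c_\nu \in k$ and only finitely many $\nu$ contributing.

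Next, using that $|\cdot|$ is an absolute value on $k$, hence multiplicative and satisfying the triangle inequality, I would obtain
\[
|f(A_1,\dots,A_N)| \;\leq\; \sum_{\nu} |c_\nu| \prod_{i,j} |a_{i,j}|^{u_{i,j}^{(\nu)}},
\]
where $a_{i,j}$ denotes the $j$-th entry of $A_i$ in the fixed order. By definition of $\|\cdot\|_0$ we have $|a_{i,j}| \leq \|A_i\|_0$, so for each monomial
\[
\prod_{i,j} |a_{i,j}|^{u_{i,j}^{(\nu)}} \;\leq\; \prod_{i} \|A_i\|_0^{\sum_{j} u_{i,j}^{(\nu)}} \;=\; \prod_{i} \|A_i\|_0^{\lambda_i}.
\]
Summing over $\nu$, the bound becomes $\bigl(\sum_{\nu} |c_\nu|\bigr)\prod_i \|A_i\|_0^{\lambda_i}$, so the constant $C := \sum_\nu |c_\nu|$ works.

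There is no real obstacle here; the statement is just a quantitative reformulation of continuity of polynomial evaluation combined with homogeneity, and the argument above applies uniformly to archimedean and non-archimedean local fields (in the ultrametric case one could even replace the sum by a maximum, yielding a sharper constant, but this is not needed). The only minor point worth flagging is that the number of monomials in $f$ is finite, which is why the constant $C$ depends on $f$ (and implicitly on $d$, $N$, $k$) but not on the matrices $A_i$.
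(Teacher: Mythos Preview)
Your proof is correct and follows essentially the same approach as the paper: reduce to monomials, bound each entry $|a_{i,j}|$ by $\|A_i\|_0$, and use the degree constraint $\sum_j u_{i,j}=\lambda_i$ to collect the powers. The only cosmetic difference is that the paper phrases the reduction as ``it suffices to treat a single monomial'' and leaves the triangle-inequality recombination implicit, whereas you write it out explicitly with $C=\sum_\nu |c_\nu|$.
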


\begin{proof}
If $f$ is a multihomogeneous monomial with $h(f)\leq H$, then
 $f(X_1,\dots,X_N)=c\prod_{i=1}^{N}{\prod_{j=1}^{\lambda_{i}}{X_{i ,\ell_{i,j}}}}$, for some $1\leq\ell_{i,j} \leq d^2$ and $c\in k$ with $|c|\leq e^H$. So, given $A_1,\dots,A_N\in M_{d}(k)$,
\begin{equation}
|f(A_1,\dots,A_N)|=|c|\prod_{i=1}^{N}{\prod_{j=1}^{\lambda_{i}}{|A_{i ,\ell_{i,j}}}|}\leq e^H\prod_{i=1}^{N}{\|A_i\|_{0}^{\lambda_{i}}}.   \notag
\end{equation}
The lemma then follows by noting that a multihomogeneous polynomial of multidegree $(\lambda_1,\dots,\lambda_N)$ is sum of at most $\prod_{i=1}^{N}{\binom{d^2-1+\lambda_i}{\lambda_i}}$ multihomogeneous monomials of the same multidegree.
\end{proof}

\begin{proof}[Proof of Theorem \ref{desigualdad}]
Let $N=N(d)$ and $r>1$ be given by Theorems \ref{nilpotentes} and \ref{traduccion} respectively, and consider first $n=N$ and the norm $\|.\|_{0}$. Let $A_1,\dots, A_N \in M_{d}(k)$. First, note that for $1\leq \alpha\leq \beta\leq N$ and $1\leq \ell\leq d$, $T_{\alpha,\beta}^{\ell}(A_1,\dots,A_N)$ is the $\ell$-th symmetric polynomial evaluated at the eigenvalues of $A_{\beta}\cdots A_{\alpha}$. Hence we have $|T_{\alpha,\beta}^{\ell}(A_1,\dots,A_N)|\leq \binom{d}{\ell}\rho(A_{\beta}\cdots A_{\alpha})^{\ell}$. Also, as the polynomials $p^{\alpha,\beta}_{j,\ell}$ in the statement of Theorem \ref{traduccion} have multidegree $(r,\dots,r,r-\ell,\dots,r-\ell,r,\dots,r)$,  by Lemma \ref{cotahomo} we have \begin{equation*}
	\begin{split}
|p^{\alpha,\beta}_{j,\ell}(A_1,\dots,A_N)| & \leq e^H{\binom{d^2-1+r}{r}}^{N-\beta+\alpha-1}{\binom{d^2-1+r-\ell}{r-\ell}}^{\beta-\alpha+1} \\
& \quad \times \left(\prod_{s=1}^{N}{\|A_s\|_{0}}\right)^{r}\left(\prod_{t=\alpha}^{\beta}{\|A_{t}\|_{0}}\right)^{-\ell}
	\end{split}
\end{equation*} for all $j,\alpha,\beta,\ell$.   
Thus, from \eqref{idpolinomial} we obtain the following:
\begin{equation}
\begin{split}
\|A_N\cdots A_1\|_{0}^{r} & = \max_{j}|f_{j}(A_1,\dots,A_N)|^{r} \\
& \leq \max_{j} \sum_{\alpha,\beta,\ell}|p_{j,\ell}^{\alpha,\beta}(A_1,\dots,A_N)||T_{\alpha,\beta}^{\ell}(A_1,\dots,A_N)| \\
 & \leq C_{1}\left(\prod_{i=1}^{N}{\|A_i\|_{0}}\right)^{r}\max_{\alpha,\beta,\ell}{\left(\frac{\rho(A_{\beta}\cdots A_{\alpha})}{\prod_{t=\alpha}^{\beta}{\|A_{t}\|_{0}}}\right)^{\ell}},
\end{split}  \notag
\end{equation}
for $C_1=e^H\sum_{\alpha,\beta,\ell}{{\binom{d^2-1+r}{r}}^{\small{N-\beta+\alpha-1}}{\binom{d^2-1+r-\ell}{r-\ell}}^{\small{\beta-\alpha+1}}\binom{d}{\ell}}$. 

Now, let $\Lambda=\max_{\alpha,\beta}{\left(\frac{\rho(A_{\beta}\cdots A_{\alpha})}{\prod_{t=\alpha}^{\beta}{\|A_{t}\|_{0}}}\right)}$. An easy computation shows that $\|AB\|_{0}\leq d\|A\|_{0}\|B\|_{0}$ for all $A,B\in M_{d}(k)$. Moreover, by the Gelfand's formula $\rho(A)=\lim_{n\to \infty}{{\|A^n\|_0}^{1/n}}$ we obtain $\rho(A)\leq d\|A\|_{0}$ for all $A\in M_{d}(k)$. These facts together imply that $\Lambda\leq d^{N+1}$, and hence $\Lambda^{d}\leq d^{(N+1)(d-1)}\Lambda$. Also, depending on whether $\Lambda$ is greater than $1$ or not, we have $\Lambda^{\ell}\leq \max(\Lambda,\Lambda^{d})\leq d^{(N+1)(d-1)}\Lambda$ for all $1\leq \ell \leq d$. Thus we conclude 
\begin{equation}
\|A_N\cdots A_1\|_{0}^{r}   \leq d^{(N+1)(d-1)}C_1\left(\prod_{i=1}^{N}{\|A_i\|_{0}}\right)^{r}\cdot\Lambda. \notag
\end{equation}
Applying $r$-th root to the last inequality, we obtain \eqref{eqdesigualdad} with $C=C(\|.\|_0):={C_1}^{1/r}d^{(N+1)(d-1)/r}$.

Now, consider an arbitrary submultiplicative norm $\|.\|$ on $M_d(k)$. Since in a finite dimensional vector space over a local field all norms are equivalent \cite[XII.2, Prop.~2.2]{langalg}, there is some $D\geq 1$ such that $D^{-1}\|A\| \leq \|A\|_0\leq  D\|A\|$ for all $A\in M_d(k)$. This implies
\begin{equation}
\begin{split}
\|A_N\cdots A_1\| & \leq D\|A_N\cdots A_1\|_{0} \\
& \leq DC \left(\prod_{1\leq i\leq N}{\|A_i\|_0}\right)\max_{1\leq \alpha\leq \beta\leq N}{ \left( \frac{\rho(A_{\beta}\cdots A_{\alpha}) }{ \prod_{\alpha\leq i\leq \beta}{\|A_i\|_0}} \right)^{1/r}} \\
& \leq D^{N+1}C\left(\prod_{1\leq i\leq N}{\|A_i\|}\right)\max_{1\leq \alpha\leq \beta\leq N}{ \left( \frac{\rho(A_{\beta}\cdots A_{\alpha}) }{ \prod_{\alpha\leq i\leq \beta}{\|A_i\|}} \right)^{1/r}} \\ 
\end{split}  \notag
\end{equation}
and proves the statement for $n=N$. For a general $n\geq N$ the result follows by applying  \eqref{eqdesigualdad} to the sequence $A_1,\dots,A_{N-1},A_NA_{N+1}\cdots A_n$, and then using the submultiplicativity of $\|.\|$.
\end{proof}
\subsection{The case of the complex numbers}\label{casoc} When the base field is $k=\C$ we can say a little more. Recall that for a norm $\|.\|$ on $\C^{d}$, the \textit{operator norm} on $M_{d}(\C)$ (also denoted by $\|.\|$) is defined by $\|A\|=\sup_{v\in \C^{d}\backslash \{ 0\}}{\frac{\|Av\|}{\|v\|}}$.
\begin{prop}\label{independ} For $d\in \N$ and $N(d)$ given by Theorem \ref{desigualdad}, there is a constant $C=C(d)>1$ such that if $n\geq N$, the inequality \eqref{eqdesigualdad} holds for all operator norms $\|.\|$ on $M_{d}(\C)$ and $A_1,\dots,A_n\in M_{d}(\C)$.
\end{prop}
We will need the following lemma which is a consequence of John's ellipsoid theorem \cite[Thm.~10.12.2]{schr} (see also \cite[Lemma 3.2]{bociineq}):
\begin{lema}\label{lemadeljairo}
For all $d\in \N$ and for every two operator norms $\|.\|$ and $\|.\|_{1}$ on $M_{d}(\C)$ there exists some $S\in\mathrm{GL}_{d}(\C)$ such that for every $A \in M_{d}(\C)$:
\begin{equation}\label{kk}
d^{-1}\|A\|\leq\|SAS^{-1}\|_{1}\leq d\|A\|. \notag
\end{equation}
\end{lema}
\begin{proof}[Proof of Proposition \ref{independ}]
Since operator norms are submultiplicative, we may assume that $n=N$. Fix the $\ell^1$-operator norm $\|A\|_{1}=\max_{j}{\sum_{i=1}^{d}|a_{ij}|}$ on $M_{d}(\C)$, where $a_{ij}$ are the entries of $A$. Since $\|A\|_1 \leq \|A\|_0 \leq d\|A\|_1$ for all $A$, by the proof of Theorem \ref{desigualdad} we have $C(\|.\|_1)=d^{N+1}C(\|.\|_0)$ with $C(\|.\|_0)$ as in the end of the proof of Theorem \ref{desigualdad}. If $\|.\|$ is an arbitrary operator norm on $M_{d}(\C)$, let $S\in \mathrm{GL}_{d}(\C)$ be relating $\|.\|$ and $\|.\|_{1}$ as in Lemma \ref{lemadeljairo}. 

Given $A_{1},\dots,A_{N}\in M_{d}(\C)$ let $B_{i}=SA_{i}S^{-1}$ for all $i$. We have
\begin{equation}
\begin{split}
\|A_N\cdots A_1\| & \leq d\|B_N\cdots B_1\|_{1} \\
& \leq d^{N+2}C(\|.\|_0) \left(\prod_{1\leq i\leq N}{\|B_i\|_1}\right)\max_{1\leq \alpha\leq \beta\leq N}{ \left( \frac{\rho(B_{\beta}\cdots B_{\alpha}) }{ \prod_{\alpha\leq i\leq \beta}{\|B_i\|_1}} \right)^{1/r}} \\
& \leq d^{2N+2}C(\|.\|_0)\left(\prod_{1\leq i\leq N}{\|A_i\|}\right)\max_{1\leq \alpha\leq \beta\leq N}{ \left( \frac{{d}^{\beta-\alpha+1}\rho(A_{\beta}\cdots A_{\alpha}) }{ \prod_{\alpha\leq i\leq \beta}{\|A_i\|}} \right)^{1/r}} \\ 
 & \leq d^{3N+2}C(\|.\|_0)\left(\prod_{1\leq i\leq N}{\|A_i\|}\right)\max_{1\leq \alpha\leq \beta\leq N}{ \left( \frac{(\rho(A_{\beta}\cdots A_{\alpha}) }{ \prod_{\alpha\leq i\leq \beta}{\|A_i\|}} \right)^{1/r}}.
\end{split}  \notag
\end{equation}
It is clear that $C(d):=d^{3N+2}C(\|.\|_0)$ does not depend on $\|.\|$.
\end{proof}
Proposition \ref{independ} allows us to conclude the following inequality:
\begin{teo}\label{detodos} Given $d\in \N$, let $C(d)$ be as in Proposition \ref{independ}. Then the following inequality is valid for all bounded sets $\mathcal{M}\subset M_{d}(\mathbb{C})$:
\begin{equation}
\mathfrak{R}(\mathcal{M})\leq C(d)\max_{1\leq j\leq N(d)}{\left(\sup\corchete{\rho(A_{1}\cdots A_{j}) : A_{i}\in \mathcal{M}}\right)^{1/j}}. \notag
\end{equation}
\end{teo}
This inequality was first proved by Bochi in \cite{bociineq} (without giving and effective bound on $N(d)$), and it has Theorem \ref{bergerwangantiguo} as an immediate consequence. In \cite[Lemma 2.1]{bro}, Breuillard gave another proof of this inequality valid for arbitrary local fields: in the Arquimedean case the same conclusion of Theorem \ref{detodos} holds with $N(d)=d^2$, while for a non Arquimedean field $k$ and for every bounded set $\mathcal{M}\subset M_d(k)$ we have the identity
\begin{equation}
\inf_{S\in \mathrm{GL}_d(k)}{\|S\mathcal{M}S^{-1}\|_0}=\max_{1\leq j\leq d^2}{\left(\sup\corchete{\rho(A_{1}\cdots A_{j}) : A_{i}\in \mathcal{M}}\right)^{1/j}}. \notag
\end{equation}
Breuillard used this result to study semigroups of invertible matrices.
\begin{proof}[Proof of Theorem \ref{detodos}]
For $1\leq j\leq N(d)$, define $\rho_{j}=\sup\corchete{\rho(A_{1}\cdots A_{j}) : A_{i}\in \mathcal{M}}$. For an arbitrary operator norm $\|.\|$ on $M_{d}(\C)$, take supremum for $A_{i}\in \mathcal{M}$ in both sides of \eqref{eqdesigualdad}. We obtain
\begin{alignat}{2}
\mathfrak{R}(\mathcal{M})^{N} & \leq\sup_{A_{i}\in \mathcal{M}}{\|A_{N}\cdots A_{1}\|}  \notag\\
& \leq C(d) \max_{1\leq j\leq N}\left(\sup_{A\in \mathcal{M}}{\|A\|}^{N-j/r}\cdot (\rho_{j})^{1/r}\right). &\quad \label{keke}
\end{alignat}
Now, recall that $\mathfrak{R}(\mathcal{M})=\inf_{\|.\|}{\sup_{A\in \mathcal{M}}{\|A\|}}$, where the infimum is taken over all operator norms on $M_{d}(\C)$ (for a proof, see \cite{rota}), and let $\|.\|_{n}$ be a sequence of operator norms on $M_{d}(\C)$ such that $\sup_{A\in \mathcal{M}}{\|A\|_{n}}\to \mathfrak{R}(\mathcal{M})$. Taking a subsequence, we may assume that for all $\|.\|_{n}$, the maximum in the right hand side of \eqref{keke} is achieved by the same index $j\in \{1,\dots, N \}$. Then, taking limit as $n$ tends to infinity in \eqref{keke} we will have
\begin{equation}\label{kiki}
\mathfrak{R}(\mathcal{M})^{N}\leq C(d)(\rho_{j})^{1/r}\cdot\mathfrak{R}(\mathcal{M})^{N-j/r }
\end{equation}
(here is where we use Proposition \ref{independ} since $C(d)$ does not depend on $n$). If $\mathfrak{R}(\mathcal{M})=0$ the conclusion is obvious. Otherwise, dividing by $\mathfrak{R}(\mathcal{M})^{N-j/r}$ and taking $j/r$-th root in \eqref{kiki} we obtain the desired inequality. 
\end{proof}

\begin{obs}\label{effective} In the proof of Theorem \ref{desigualdad} the constant $C(\|.\|_0)$ depends on $H$, the maximum on the heights of the polynomials $p_{j,\ell}^{\alpha,\beta}$. In the case of the complex numbers we can give an effective upper bound on $H$ by means of the effective arithmetic Nullstellensatz \cite[Thm.~1]{klpasom}. Applied to our case, we obtain $r\leq 4(Nd^2+1)(dN)^{Nd^2+1}$ and
	\small{\begin{equation*}
	H\leq \log{2}(Nd^2+(N(d^2+1))r)+r(Nd^2+2)(\log(d\dbinom{N+1}{2}+1)+(Nd^2+8)\log(Nd^2+2)dN).
	\end{equation*}}
\end{obs} \normalsize
\vspace{-2ex}Hence \small{\begin{equation*}
	\begin{split}
C(d)& =d^{3N+2}C(\|.\|_0) \\
& =e^{H/r}d^{3N+2+(N+1)(d-1)/r}\left( \sum_{\alpha,\beta,\ell}{{\binom{d^2-1+r}{r}}^{\small{N-\beta+\alpha-1}}{\binom{d^2-1+r-\ell}{r-\ell}}^{\small{\beta-\alpha+1}}\binom{d}{\ell}}\right)^{1/r}.
\end{split}
\end{equation*}} \normalsize
\section{Ergodic-theoretical consequences}\label{seccioncinco}
For the proof of Theorem \ref{bergerwangnuevo}, we will need the following result which may be seen as a quantitative version of Poincar\'e's Recurrence theorem for measure preserving transformations. It is a consequence of Birkhoff Ergodic Theorem, and the fact that for a measurable set $U$ of positive measure, for almost all points $x$ in $U$, the frequency of points of the sequence $x,Tx,T^{2}x,\dots$ that belong to $U$ is positive (compare with the subbaditive ergodic theorem of Karlsson-Gou\"ezel \cite[Thm.~1.1]{goukar}). For a detailed proof, see \cite[Lemma 3.12]{bochilya}.
\begin{lema}\label{superpoincare}
Let $T:X\rightarrow X$ be a measure preserving map over the probability space $(X,\mathcal{F},\mu)$, and let $U\in \mathcal{F}$ have positive measure. Given $\gamma>0$, there exists a measurable map $N_0:U\rightarrow \N$ such that, for $\mu-$a.e. $x\in U$ and for every $n\geq N_{0}(x)$ and $t\in [0,1]$ there is some $\ell \in \corchete{1,\dots,n}$ with $T^{\ell}(x)\in U$ and $|(\ell/n)-t|<\gamma$.
\end{lema}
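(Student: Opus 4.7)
The plan is to apply Birkhoff's Ergodic Theorem to the indicator $\mathbf{1}_U$, producing the $T$-invariant a.e.-defined \emph{return frequency}
\[
f(x) = \lim_{n\to\infty} \frac{1}{n}\sum_{j=0}^{n-1} \mathbf{1}_U(T^j x), \qquad \int_X f \, d\mu = \mu(U) > 0,
\]
and first to verify the hinted fact that $f(x) > 0$ for $\mu$-a.e.\ $x \in U$. For this, observe that $W := \{f = 0\}$ is $T$-invariant modulo null sets, so for a.e.\ $x \in W$ the entire orbit lies in $W$, and hence $\mathbf{1}_U(T^j x) = \mathbf{1}_{U \cap W}(T^j x)$ for all $j$. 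Integrating the Birkhoff convergence (via dominated convergence) gives $\int_W f \, d\mu = \mu(W \cap U)$; since $f \equiv 0$ on $W$, this forces $\mu(W \cap U) = 0$.

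Let $U' \subset U$ be the full-measure subset on which the Birkhoff limit exists and $f(x) > 0$, and set $c(x) := f(x)$, $S_n(x) := \sum_{j=0}^{n-1}\mathbf{1}_U(T^j x)$. The quantity
\[
M(x) := \inf\!\left\{n_0 \in \N : |S_n(x) - c(x)\,n| \leq \tfrac{\gamma c(x)}{16}\,n \text{ for every } n \geq n_0 \right\}
\]
is measurable and a.s.\ finite on $U'$; define $N_0(x) := \max\!\bigl(M(x),\, \lceil 16 M(x)/(\gamma c(x)) \rceil\bigr)$, which is measurable from $U'$ to $\N$.

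For $x \in U'$, $n \geq N_0(x)$, and $t \in [0,1]$, the set $I_{n,t} := \{\ell \in \{1,\dots,n\} : |\ell/n - t| < \gamma\}$ is an integer interval $[a, b]$ with $b - a \geq \gamma n - 1$, and the count of $\ell \in I_{n,t}$ with $T^\ell x \in U$ is $S_{b+1}(x) - S_a(x)$. Since $b + 1 \geq \gamma n /2 \geq M(x)$, the endpoint $b+1$ always lies in the Birkhoff regime; if also $a \geq M(x)$, the two-endpoint estimate directly yields $S_{b+1} - S_a \geq c(x)(b-a) - \gamma c(x) n /8 > 0$, and otherwise the choice $N_0(x) \geq 16 M(x)/(\gamma c(x))$ gives $a \leq M(x) \leq \gamma c(x) n / 16$, so the trivial bound $S_a \leq a$ combined with the Birkhoff estimate at $b+1$ again forces strict positivity. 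This produces the required $\ell$. The main (mild) obstacle is the first step, establishing $f > 0$ a.e.\ on $U$; the quantitative estimates in the third paragraph are routine Birkhoff bookkeeping, with only a minor subtlety near $t \in \{0,1\}$ where $I_{n,t}$ has length only $\approx \gamma n$ rather than $\approx 2\gamma n$, absorbed by the constants.
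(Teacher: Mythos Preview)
Your argument is correct and follows precisely the approach the paper indicates: the paper does not prove this lemma in full but states that it is a consequence of Birkhoff's Ergodic Theorem together with the fact that the return frequency to $U$ is positive for $\mu$-a.e.\ point of $U$, referring to \cite[Lemma~3.12]{bochilya} for details. Your proof supplies exactly these details---the positivity of $f$ on $U$ via the invariance of $\{f=0\}$, and then a direct count of returns in the window $I_{n,t}$ using the Birkhoff estimate at both endpoints---so the two arguments coincide (up to inessential choices of constants such as your factor $16$ and the harmless off-by-one in the bound $b-a\geq \gamma n-1$).
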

\begin{proof}[Proof of Theorem \ref{bergerwangnuevo}]
Fix an operator norm $\|.\|$ on $M_{d}(k)$, and let $Y=\{x\in X : \lambda(x)\in \R\}$. This is a measurable $T$-invariant set, and since $\rho(A)\leq \|A\|$ for all $A\in M_{d}(k)$, we have that both sides of\eqref{bww} equal $-\infty$ for $\mu$-almost all $x\in X\backslash Y$. So we only have to check the result $\mu-$a.e. in $Y$.

Assume the contrary. That is, assume the existence of some $\epsilon>0, K\in \N$ and a measurable set $U\subset Y$ of positive measure such that, for all $x\in U$, if $n\geq K$, then $\log{\rho(\ccl{A}{n}{x})}/n+\epsilon \leq \lambda(x)$.
By Egorov's theorem, and restricting to a smaller subset if necessary, we may assume that on $U$,  $\log{\|\ccl{A}{n}{x}\|}/n$ converges uniformly to $\lambda(x)$.

Let $N,r$ and $C$ be as in the statement of Theorem \ref{desigualdad} and let $\epsilon'=\epsilon/(2+6Nr)$. By the uniform convergence assumption, there is some $M\geq 1$ such that, $ n\geq M$ implies 
\begin{equation}\label{necesaria1}
|\log{\|\ccl{A}{n}{x}\|}-n\lambda(x)|<n\epsilon' \text{ for all }x\in U.
\end{equation}
Take $x\in U$ and $N_0(x)\in \N$ such that Lemma \ref{superpoincare} holds with $\gamma=1/3N$, and let $n\geq \max(3NM,3NK,3Nr\log{C}/\epsilon',N_0(x))$. Let $m_0=0$, and given $1\leq i\leq N$ let $1\leq m_i \leq n$ be such that 
\begin{equation}\label{fracrecu}
\left|\frac{m_i}{n}-\frac{i}{N}\right|<\frac{1}{3N} 
\end{equation}
and $T^{m_i}x\in U$. We have that $m_{i}-m_{i-1}>(in/N-n/3N)-((i-1)/N+n/3N)=n/3N\geq \max(M,K,r\log{C}/\epsilon')$ for all $1\leq i\leq N$.

Now apply Theorem \ref{desigualdad} to $A_{i}=\ccl{A}{m_i-m_{i-1}}{T^{m_{i-1}}x}$. By the cocycle relation, we obtain $A_{N}\cdots A_{1}=\ccl{A}{m_N}{x}$, and hence
\small\begin{equation}\label{necesaria2}
\begin{split}
\hspace{-3mm}\log{\|\ccl{A}{m_N}{x}\|}  & \leq  \log{C}+\sum_{i=1}^{N}{\log{\|\ccl{A}{m_i-m_{i-1}}{T^{m_{i-1}}x}\|}}  \\
 & \hspace{3mm}+ \frac{1}{r}\left(\log{\rho(\ccl{A}{m_{\beta}-m_{\alpha-1}}{T^{m_{\alpha-1}}x})}-\sum_{i=\alpha}^{\beta}{\log{\|\ccl{A}{m_i-m_{i-1}}{T^{m_{i-1}}x}\|}}\right) 
 \end{split}
\end{equation}
\normalsize for some $1\leq \alpha\leq \beta\leq N$. But, by definition, $T^{m_i}x\in U$ for all $i$, and as $m_i-m_{i-1}\geq M$, \eqref{necesaria1} applies. Combining it with \eqref{necesaria2} we have
\begin{equation}
\begin{split}
\log{\rho(\ccl{A}{m_{\beta}-m_{\alpha-1}}{T^{m_{\alpha-1}}x})} & \geq
\sum_{i=\alpha}^{\beta}{\log{\|\ccl{A}{m_i-m_{i-1}}{T^{m_{i-1}}x}\|}} \\
& +r\left( \log{\|\ccl{A}{m_N}{x}\|}-\sum_{i=1}^{N}{\log{\|\ccl{A}{m_i-m_{i-1}}{T^{m_{i-1}}x}\|}} -\log{C}  \right)\\
&  >   (m_{\beta}-m_{\alpha-1})(\lambda(x)-\epsilon')-r(\log{C}+2\epsilon'm_N)\\
& = (m_{\beta}-m_{\alpha-1})\lambda(x)-\left( \epsilon'((m_{\beta}-m_{\alpha-1})+2rm_N))+r\log{C}    \right). \notag
\end{split}
\end{equation}
On the other hand, by \eqref{fracrecu} we have 
\begin{equation}
\frac{m_N}{m_{\beta}-m_{\alpha-1}}<\frac{n}{\frac{n(\beta-\alpha+1)}{N}-\frac{2n}{3N}}\leq 3N. \notag
\end{equation}
But, since $T^{m_{\alpha-1}}x\in U$, and $(m_{\beta}-m_{\alpha-1})\geq \max(K,r\log{C}/\epsilon')$ we conclude
\begin{equation}
\begin{split}
\frac{\epsilon'((m_{\beta}-m_{\alpha-1})+2r(m_N))+r\log{C} }{m_{\beta}-m_{\alpha-1}} & = \epsilon'+\frac{\epsilon'2r(m_N)}{(m_{\beta}-m_{\alpha-1})}+\frac{r\log{C}}{(m_{\beta}-m_{\alpha-1})} \\
 & \leq \epsilon'+\epsilon'6Nr+\frac{r\log{C}}{(m_{\alpha}-m_{\alpha-1})}\\
  & \leq (2+6Nr)\epsilon'=\epsilon. \notag
\end{split}
\end{equation}
This is the desired contradiction and the proof is complete.
\end{proof}

\section{Geometric remarks}\label{seccionseis}

We can observe that the main ingredients of the proof of Theorem \ref{bergerwangnuevo} are Theorem \ref{desigualdad} and Poincar\'e's recurrence Theorem. Therefore, if we have another situation where an analogue of inequality \eqref{eqdesigualdad} holds, then we should obtain a result similar to Theorem \ref{bergerwangnuevo}. This is the case of cocycles of isometries of Gromov hyperbolic spaces. For definition and further properties of Gromov hyperbolicity see \cite{bri,papa,geodyn}.

As it was proved in \cite[Thm.~1.2]{yo}, if $M$ is a Gromov hyperbolic space with distance $d$, then there is a constant $C>0$ such that, for all $o\in M$ and $f,g$ isometries of $M$ we have
\small\begin{equation}
\small{d(fgo,o)\leq C+\max \left(d(fo,o)+d^{\infty}(g),d^{\infty}(f)+d(go,o),\frac{d(fo,o)+d(go,o)+d^{\infty}(fg)}{2}  \right)}, \notag
\end{equation}
\normalsize where $d^{\infty}(h)=\lim_{n\to \infty}\frac{d(h^{n}o,o)}{n}$ is the stable length.

In this context, given a probability space $(X,\mathcal{F},\mu)$ and a measure preserving map $T:X\rightarrow X$, a \textit{cocycle of isometries} of $M$ is a measurable map $A:X\rightarrow \Isom(M)$, where $\Isom(M)$ is the group of isometries of $M$, endowed with the Borel $\sigma-$algebra induced by the compact-open topology. We say that the cocycle $A$ is \textit{integrable} if the map $x\mapsto d(A(x)o,o)$ is integrable for some (and hence all) $o \in M$. In the same way as for linear cocycles, we define the family of maps $A^{n}:X\rightarrow \Isom(M)$. For references about cocycles of isometries, see e.g. \cite{goukar,karmar}. 

Following the same steps of the proof of Theorem \ref{bergerwangnuevo}, we can obtain the following:

\begin{prop}\label{bewagromov}Let $M$ be a Gromov hyperbolic space, $o\in M$, and let $T$ be a measure-preserving transformation of a probability space $(X,\mathcal{F},\mu)$. Also, let $A: X \rightarrow  \Isom(M)$ be an integrable cocycle of isometries of $M$. Then for $\mu$-almost all $x\in X$ and we have the following limits exist in $\R^{+}_{0}$ and are equal:
\begin{equation}
\limsup_{n\to \infty}{\frac{d^{\infty}(\ccl{A}{n}{x})}{n}}=\lim_{n \to \infty}{\frac{d(\ccl{A}{n}{x}o,o)}{n}}. \notag
\end{equation}
\end{prop}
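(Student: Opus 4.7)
The right-hand limit $\lambda(x):=\lim_{n}d(\ccl{A}{n}{x}o,o)/n$ exists $\mu$-almost everywhere by Kingman's subadditive ergodic theorem applied to the subadditive cocycle $n\mapsto d(\ccl{A}{n}{x}o,o)$ (the subadditivity follows from the cocycle relation and the triangle inequality), and it is $T$-invariant. The easy inequality $\limsup_{n}d^{\infty}(\ccl{A}{n}{x})/n\leq\lambda(x)$ is immediate from the general bound $d^{\infty}(h)\leq d(ho,o)$ valid for any isometry $h$, which itself is an iterated triangle inequality. For the reverse inequality I would argue by contradiction, mirroring the proof of Theorem \ref{bergerwangnuevo} and using the two-isometry inequality of \cite[Thm.1.2]{yo} in place of Theorem \ref{desigualdad}.

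Suppose the claim fails. Then there exist $\epsilon>0$, $K\in\N$ and a measurable set $U\subset X$ of positive measure such that $d^{\infty}(\ccl{A}{n}{x})/n+\epsilon\leq\lambda(x)$ for every $x\in U$ and every $n\geq K$. By Egorov, after shrinking $U$ we may assume that $d(\ccl{A}{n}{x}o,o)/n\to\lambda(x)$ uniformly on $U$, so for a parameter $0<\epsilon'\ll\epsilon$ to be fixed at the end there exists $M$ with $|d(\ccl{A}{n}{x}o,o)-n\lambda(x)|<n\epsilon'$ on $U$ for $n\geq M$. Fix $x\in U$ and apply Lemma \ref{superpoincare} with $t=1/2$ and a small $\gamma>0$ to obtain, for every sufficiently large $n$, an integer $m\in\{1,\dots,n\}$ with $T^{m}x\in U$ and $|m/n-1/2|<\gamma$; in particular both $m$ and $n-m$ exceed $\max(K,M)$.

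Now apply the inequality of \cite[Thm.1.2]{yo} with $f=\ccl{A}{n-m}{T^{m}x}$, $g=\ccl{A}{m}{x}$ and basepoint $o$. By the cocycle relation $fg=\ccl{A}{n}{x}$, and the inequality gives $d(\ccl{A}{n}{x}o,o)\leq C+\max(\alpha_{1},\alpha_{2},\alpha_{3})$ where $\alpha_{1}=d(fo,o)+d^{\infty}(g)$, $\alpha_{2}=d^{\infty}(f)+d(go,o)$ and $\alpha_{3}=\tfrac{1}{2}(d(fo,o)+d(go,o)+d^{\infty}(fg))$. Since $x,T^{m}x\in U$, the three stable-length terms are bounded by $m(\lambda(x)-\epsilon)$, $(n-m)(\lambda(x)-\epsilon)$ and $n(\lambda(x)-\epsilon)$ respectively; the two displacement terms differ from $m\lambda(x)$ and $(n-m)\lambda(x)$ by less than $m\epsilon'$ and $(n-m)\epsilon'$; and $d(\ccl{A}{n}{x}o,o)>n(\lambda(x)-\epsilon')$. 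Substituting $m\approx n/2$, each of the three cases in the max produces an inequality of the form $cn\epsilon\leq C+c'n\epsilon'$ with positive constants $c,c'$ independent of $n$, which is absurd once $\epsilon'$ is chosen small enough relative to $\epsilon$ and $n$ is taken large.

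The main obstacle is the third term $\alpha_{3}$: its factor $1/2$ halves the linear-in-$n$ gain provided by the bound $d^{\infty}(fg)\leq n(\lambda(x)-\epsilon)$, so the bookkeeping must confirm that after all cancellations the net coefficient of $\epsilon$ still dominates that of $\epsilon'$. This is also why the choice $t=1/2$ in Lemma \ref{superpoincare} is essential: splitting with $m$ close to $0$ or to $n$ would make the estimates from cases $\alpha_{1}$ and $\alpha_{2}$ too weak, whereas $m\approx n-m$ balances all three cases simultaneously.
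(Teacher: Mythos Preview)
Your proposal is correct and follows exactly the route the paper indicates: it mirrors the proof of Theorem \ref{bergerwangnuevo}, replacing Theorem \ref{desigualdad} by the two-isometry inequality from \cite[Thm.~1.2]{yo} and accordingly using a single recurrence time $m\approx n/2$ in Lemma \ref{superpoincare} instead of $N-1$ of them. The arithmetic in all three cases $\alpha_1,\alpha_2,\alpha_3$ closes with $\epsilon'<\epsilon/3$ (note that in $\alpha_3$ the value of $m$ actually cancels, so the choice $t=1/2$ is needed only to make $\alpha_1$ and $\alpha_2$ fail simultaneously, as you say).
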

\vspace{-2ex}A result similar to Proposition \ref{bewagromov} is far from being true if we do not assume a negative curvature condition on $M$.
\begin{ej}Let $X=\mathbb{S}^1$ and $\mu$ be the Lebesgue measure on $X$. If $T(z)=z^{2}$ is the doubling map on $X$, which preserves $\mu$, and $R_{a}(p)=p+a$ is the translation by $a\neq 0$ on $\R^{2}$, define a cocycle $A:\mathbb{S}^1 \rightarrow \Isom(\mathbb{R}^{2})$ by $A(z)p=T(z)R_{a}(z^{-1}p)$ for all $p\in \R^2$. Note that $\ccl{A}{n}{z}p=T^{n}(z)R^{n}_{a}(z^{-1}p)$ and hence the limit
$\lim_{n \to \infty}{\frac{d(\ccl{A}{n}{z}p,p)}{n}}$ exists and equals $|a|>0$ for all $z\in \mathbb{S}^1$ and $p\in \R^2$. On the other hand, if $z$ is not a periodic point for $T$, then $\ccl{A}{n}{z}$ is not a translation and hence has a fixed point. Thus we have that $d^{\infty}(\ccl{A}{n}{z})=0$ for all $n\in \N$ and all $z$ in the set of non periodic point of $T$, which is a full measure set with respect to $\mu$.
\end{ej}
\paragraph{Acknowledgment}
I am very grateful to J. Bochi for very interesting and valuable discussions throughout all this work. I also thank G. Urz\'ua for valuable discussions about Nullstellensatz, and the referee for the detailed report and the suggestions and corrections to the text. This article was supported by CONICYT scholarship 22172003, and partially supported by CONICYT PIA ACT172001.

\small{Eduardo Oreg\'on-Reyes (\texttt{eoregon@berkeley.edu})}\\
\small{Department of Mathematics}\\
\small{University of California at Berkeley}\\
\small{848 Evans Hall, Berkeley, CA 94720-3860, U.S.A.}

\end{document}